\documentclass[final]{elsarticle}

\usepackage{hyperref}
\hypersetup{
	colorlinks=true,
	linkcolor=blue,
	filecolor=blue,      
	urlcolor=red,
	citecolor=green,
}
%\modulolinenumbers[5]
\usepackage{graphicx}
\usepackage{subfigure}
\usepackage{amsmath}
\usepackage{amssymb}
\usepackage{amsthm}
\usepackage{geometry}
\usepackage[ruled,linesnumbered]{algorithm2e}
%\journal{Journal of \LaTeX\ Templates}

%% math operator
\DeclareMathOperator{\N}{\mathbb{N}}

\DeclareMathOperator{\R}{\mathbb{R}}
\DeclareMathOperator{\Span}{\text{span}}

\newcommand{\wtV}{\widetilde{V}}
\newcommand{\whV}{\widehat{V}}
\newcommand{\Pd}{\mathcal{P}(d)}
\newcommand{\calC}{\mathcal{C}}
\newcommand{\calA}{\mathcal{A}}
\newcommand{\calS}{\mathcal{S}}
\newcommand{\calU}{\mathcal{U}}
\newcommand{\calV}{\mathcal{V}}
\newcommand{\calW}{\mathcal{W}}

\newcommand{\bbH}{\mathbb{H}}
\DeclareMathOperator{\lrlex}{<^{\text{lrlex}}}
\DeclareMathOperator{\Log}{logical}
\DeclareMathOperator{\powerproduct}{\widehat{\circledast}}
\newcommand{\defas}{\overset{\text{def}}{=}}
\DeclareMathOperator{\spa}{\text{spa}}
\DeclareMathOperator{\nnz}{\text{nnz}}

%% theorem-like env
\theoremstyle{plain}
\newtheorem{thm}{Theorem}[section]
\newtheorem{lem}[thm]{Lemma}
\newtheorem{prop}[thm]{Proposition}
\newtheorem{cor}[thm]{Corollary}

\theoremstyle{definition}
\newtheorem{defn}{Definition}[section]
\newtheorem{conj}{Conjecture}[section]
\newtheorem{exmp}{Example}[section]

\theoremstyle{remark}
\newtheorem*{rem}{Remark}

%% For remarks
\usepackage{CJKutf8}
\usepackage[breakable]{tcolorbox}
\newtcolorbox{mybox}{breakable,coltitle=black,colbacktitle=gray!20,colframe=black,colback=white,coltext=black}

\graphicspath{{figs/}}
%%%%%%%%%%%%%%%%%%%%%%%
%% Elsevier bibliography styles
%%%%%%%%%%%%%%%%%%%%%%%
%% To change the style, put a % in front of the second line of the current style and
%% remove the % from the second line of the style you would like to use.
%%%%%%%%%%%%%%%%%%%%%%%

%% Numbered
%\bibliographystyle{model1-num-names}

%% Numbered without titles
%\bibliographystyle{model1a-num-names}

%% Harvard
%\bibliographystyle{model2-names.bst}\biboptions{authoryear}

%% Vancouver numbered
%\usepackage{numcompress}\bibliographystyle{model3-num-names}

%% Vancouver name/year
%\usepackage{numcompress}\bibliographystyle{model4-names}\biboptions{authoryear}

%% APA style
%\bibliographystyle{model5-names}\biboptions{authoryear}

%% AMA style
%\usepackage{numcompress}\bibliographystyle{model6-num-names}

%% `Elsevier LaTeX' style
\bibliographystyle{elsarticle-num}
%%%%%%%%%%%%%%%%%%%%%%%

\begin{document}

\begin{frontmatter}

\title{Power-product matrix: nonsingularity, sparsity and determinant \tnoteref{mytitlenote}}
\tnotetext[mytitlenote]{The authors are supported by the National Natural Science Foundation of China (Grant 11601327).}

% %% Group authors per affiliation:
% \author{...\fnref{myfootnote}},
% \address{...}
% % \fntext[myfootnote]{Since 1880.}

%% or include affiliations in footnotes:
\author[mymainaddress,mysecondaryaddress]{Yi-Shuai Niu\corref{mycorrespondingauthor}}
\cortext[mycorrespondingauthor]{Corresponding author}
\ead{niuyishuai@sjtu.edu.cn}
\author[mymainaddress]{Hu Zhang}
\ead{jd\_luckymath@sjtu.edu.cn}

\address[mymainaddress]{School of Mathematical Sciences, Shanghai Jiao Tong University, Shanghai 200240, China}
\address[mysecondaryaddress]{SJTU-Paristech Elite Institute of Technology, Shanghai Jiao Tong University, Shanghai 200240, China}

\begin{abstract}
We prove the nonsingularity of a class of integer matrices $V(n,d)$, namely power-product matrix, for positive integers $n$ and $d$. Some technical proofs are mainly based on linear algebra and enumerative combinatorics, particularly the generating function method and involution principle. We will show that the matrix $V(n,d)$ is nonsingular for all positive integers $n$ and $d$, and often with sparse structure. Special attention is given to the computation of the determinant $V(2,d)$ with $d\in \N^*$. 
\end{abstract}

\begin{keyword}
Power-product matrix \sep Nonsingularity \sep Sparsity \sep Determinant
\MSC[2020] 15B36 \sep 15A09 \sep 15A15 \sep 05A19 \sep 11Cxx
\end{keyword}

\end{frontmatter}

%\linenumbers

\section{Introduction}
In this paper, we are interested in the properties (nonsingularity, sparsity and determinant) of a special structured matrix, namely power-product matrix, which is defined as follows: Let $n\in \mathbb{N}^*$ and $d\in \mathbb{N}^*$, consider the problem of putting $d$ balls into $n$ bins (empty bin is allowed), then the totally number of possible choices is denoted by $s_{n,d}$ which equals to ${d+n-1 \choose d}$. Particularly, if $n=d$, we write $s_n=s_{n,n}$. The \emph{power-product matrix} is defined by 
\begin{equation}\label{eq:V(n,d)}
  V(n,d) \defas \begin{bmatrix}
(\alpha^1)^{\alpha^1} & \cdots & (\alpha^1)^{\alpha^{s_{n,d}}}\\
(\alpha^2)^{\alpha^1} & \cdots & (\alpha^2)^{\alpha^{s_{n,d}}}\\
\vdots & \vdots & \vdots\\
(\alpha^{s_{n,d}})^{\alpha^1} & \cdots & (\alpha^{s_{n,d}})^{\alpha^{s_{n,d}}}
\end{bmatrix},  
\end{equation}
where $\alpha^i=(\alpha^i_1,\ldots, \alpha^i_n)\in \N^n$ is one possible choice of putting $d$ balls into $n$ bins; $(\alpha^i)^{\alpha^j}$ is the power-product of $\alpha^i$ and $\alpha^j$ defined by $$(\alpha^i)^{\alpha^j} \defas \displaystyle\prod_{k=1}^{n}(\alpha^i_k)^{\alpha^j_k},$$ under the assumption that $0^0=1$. Let us denote $$B(n,d) = \{ \alpha^1,\ldots,\alpha^{s_{n,d}}\}$$
or in matrix form $\left[
 \alpha^1 | \cdots | \alpha^{s_{n,d}} \right]$ with $n$ rows and $s_{n,d}$ columns where $\alpha^i$ is a column vector, we can define in general the \emph{power-product} of two matrices as in the next definition:
\begin{defn}[Power-product]
Let $A = [a_{i,j}]_{n\times m}$ be an $n\times m$ matrix and $B=[b_{i,j}]_{m\times q}$ be an $m\times q$ matrix, then \emph{$A$ power-product $B$}, denoted by $A\powerproduct B$, is an $n\times q$ matrix defined by 
\begin{equation}
    \label{eq:power-product}
A\powerproduct B = \left[\prod_{k=1}^{m} a_{i,k}^{b_{k,j}}\right]_{n\times q}.
\end{equation}
\end{defn}
Obviously, the definition of the power-product is quite similar to the matrix multiplication, i.e., $$A\cdot B = \left[\sum_{k=1}^{m} a_{i,k} \times b_{k,j}\right]_{n\times q},$$ 
where  $a_{i,k}\times {b_{k,j}}$ and $\sum$ in matrix multiplication is replaced by  $a_{i,k}^{b_{k,j}}$ and $\prod$ in power-product. As in matrix multiplication, the power-product is also non-commutative. Suppose that $\alpha$ is a column vector and denote $\alpha^{\top}$ as the transpose of $\alpha$, then we can use the power-product notation to rewrite $(\alpha^i)^{\alpha^j}$ as $(\alpha^i)^{\top} \powerproduct \alpha^j$, and the power-product matrix $V(n,d)$ as  
$$V(n,d) = B(n,d)^{\top} \circledast B(n,d).$$

\begin{exmp}
Let $n=3$ and $d=2$, then the matrix $B(3,2)$ is
$$B(3,2) = \begin{bmatrix}
 2 & 0 & 0 & 1 & 1 & 0\\
 0 & 2 & 0 & 1 & 0 & 1\\
 0 & 0 & 2 & 0 & 1 & 1
\end{bmatrix}.$$
The elements in the first row of the matrix $V(3,2)$ are computed as:
$$(\alpha^1)^{\alpha^1} = (2,0,0)^{(2,0,0)} = 2^2\times 0^0\times 0^0 = 4,$$
$$(\alpha^1)^{\alpha^2} = (2,0,0)^{(0,2,0)} = 2^0\times 0^2\times 0^0 = 0,$$
and so on. Thus, we have the power-product matrix $V(3,2)$ as:
$$V(3,2) = \begin{bmatrix}
4&0&0&0&0&0\\
    0&4&0&0&0&0\\
    0&0&4&0&0&0\\
    1&1&0&1&0&0\\
    1&0&1&0&1&0\\
    0&1&1&0&0&1\\
\end{bmatrix}.
$$
It is easy to verify that $V(3,2)$ is nonsingular since it is a triangular matrix with non-zero diagonal elements, so that its eigenvalues are $1$ of multiplicity $3$ and $4$ with multiplicity $3$, and the determinant is $64$. The number of non-zero elements in $V(3,2)$ is $12$ among totally $36$ elements, so that the sparsity is about $66.67\%$.
\end{exmp}
Note that $V(n,d)$ is not always triangular and its nonsingularity is non-trivial at all. Moreover, the order of elements in the set $B(n,d)$ will not change the nonsingularity and the determinant of the matrix $V(n,d)$, because any permutation of elements in $B(n,d)$ leads to a permuted matrix of $V(n,d)$ (both in rows and columns) which will not change the nonsingularity and determinant.

In this paper, we will focus on the proof of the nonsingularity of $V(n,d)$ for all positive integers $n$ and $d$, and investigate some properties of this matrix, including the sparsity and the determinant computation. We will show that: (1) $V(n,d)$ is nonsingular for all positive integers $n$ and $d$; (2) $V(n,d)$ is often with sparse structure; (3) the formulation for computing the determinant of $V(n,d)$ with special $n$ and $d$ is established. A conjecture for general formulation of $\det V(n,d)$ with all positive integers $n$ and $d$ is proposed which deserves more attention in the future.

\section{Application of the power-product matrix}
The power-product matrix $V(n,d)$ arise in many applications. For example, in polynomial representation, it can be used to generate a convex multi-variate polynomial basis for the polynomial vector space $\R_d[x]$ (all real polynomials of variable $x\in \R^n$ and of degree $d$). Particularly, a so called Difference-of-Convex-Sums-of-Squares (DC-SOS) decomposition of polynomials defined in \cite{niu2018dcsos} can be established in this basis, which helps to reformulate any polynomial optimization problem as DC (difference-of-convex) programming problem, and the later one can be investigated using powerful theories and algorithms in the field of DC programming. In another view, This matrix is also related to construct a class of power sum representation for polynomials (see, e.g., \cite{lee2016power,fischer1994sums}).

To see this, let $x\in \R^n$, the multinomial equation reads 
\begin{equation}
    \label{eq:multinomial}
    \left(\sum_{i=1}^{n} x_i\right)^d = \sum_{\alpha\in B(n,d)} {d \choose \alpha} x^{\alpha},
\end{equation}
where ${d\choose \alpha} \defas \frac{d!}{\alpha_1!\alpha_2!\cdots\alpha_n!}$. Then, we get for all $\alpha\in B(n,d)$ that

$$\langle\alpha, x\rangle^d  =\left( \sum_{i=1}^{n} \alpha_i x_i \right)^d \overset{y_i=\alpha_ix_i}{=} \left( \sum_{i=1}^{n} y_i \right)^d \overset{\eqref{eq:multinomial}}{=} \sum_{\beta\in B(n,d)} {d \choose \beta} y^{\beta} \overset{y_i=\alpha_ix_i}{=} \sum_{\beta\in B(n,d)} {d \choose \beta} \alpha^{\beta} x^{\beta}.$$
It follows that
\begin{equation}\label{eq:iden}
\begin{bmatrix}
\langle\alpha^1, x\rangle^d \\
\langle\alpha^2, x\rangle^d \\
\vdots \\
\langle\alpha^{s_{n,d}}, x\rangle^d
\end{bmatrix}
=
\whV(n,d) \cdot \begin{bmatrix}
 x^{\alpha^1}\\
 x^{\alpha^2}\\
 \vdots\\
 x^{\alpha^{s_{n,d}}}
\end{bmatrix}.
\end{equation}
where 
\begin{equation}\label{eq:whV(n,d)}
\whV(n,d) = \begin{bmatrix}
 \tbinom{d}{\alpha^1} (\alpha^1)^{\alpha^1} &  \cdots & \tbinom{d}{\alpha^{s_{n,d}}} (\alpha^1)^{\alpha^{s_{n,d}}}\\
 \tbinom{d}{\alpha^1} (\alpha^2)^{\alpha^1} &  \cdots & \tbinom{d}{\alpha^{s_{n,d}}} (\alpha^2)^{\alpha^{s_{n,d}}}\\
  \vdots& \vdots&\vdots\\
  \tbinom{d}{\alpha^1} (\alpha^{s_{n,d}})^{\alpha^1}&  \cdots & \tbinom{d}{\alpha^{s_{n,d}}} (\alpha^{s_{n,d}})^{\alpha^{s_{n,d}}}
\end{bmatrix}. 
\end{equation}
Interestingly, $\whV(n,d)$ enjoys the identify that \begin{equation}
    \label{eq:relationVvsVh}
\whV(n,d) = V(n,d) \cdot \begin{bmatrix}
 \tbinom{d}{\alpha^1}\\
 & \ddots\\
 && \tbinom{d}{\alpha^{s_{n,d}}}
\end{bmatrix},
\end{equation}
which is nonsingular if and only if $V(n,d)$ is nonsingular.  Moreover, 
$$\det \whV(n,d) = \left(\prod_{i=1}^{m} \tbinom{d}{\alpha^i}\right) \det V(n,d).$$
Therefore, if we can prove that $V(n,d)$ is nonsingular for all positive integer couples $(n,d)$, then the set of polynomials $\mathcal{B}=\{ \langle\alpha^1, x\rangle^d , \ldots, \langle\alpha^{s_{n,d}}, x\rangle^d  \}$ is a basis of $\bbH_d[x]$ (all real homogeneous polynomials of variables $x\in \R^n$ and of degree $d$), because the set $\{ x^{\alpha^1}, \ldots, x^{\alpha^{s_{n,d}}}\}$ is a canonical basis of $\bbH_d[x]$. Furthermore, if $d$ is even, then $\mathcal{B}$ consists of convex and square polynomials, namely CSOS (convex-sums-of-squares) polynomials, which results that any even degree polynomial in $\bbH_d[x]$ can be presented in the CSOS basis $\mathcal{B}$. Similarly, in case of odd degree homogeneous polynomial, it can be formulated as an even degree homogeneous polynomial by multiplying a new variable which will be fixed to $1$ after representation. 

The statement: ``The set $\mathcal{B}$ is a CSOS polynomial basis of $\bbH_d[x]$ for even degree $d$" is first proposed as a conjecture in \cite{niu2018dcsos}, which will be proved in this paper as well. 

\section{Nonsingularity of $V(n,d)$}
In this section, we will focus on the nonsingularity of $\whV(n,d)$ defined in \eqref{eq:whV(n,d)} and $V(n,d)$ defined in \eqref{eq:V(n,d)} using techniques in enumerative combinatorics (see e.g., \cite{stanley2011enumerative}). As a common notation, let us denote the coefficient of $x^n$ in $F(x)=\sum_{n\ge0}a_nx^n$ as $$a_n=[x^n]F(x).$$ 
\begin{lem}\label{lem:app1}
Let $m\ge k \ge 0$, then for $n>m$, we have
\begin{equation}\label{eq:lem:app1}
\sum_{k=0}^m\ \ \sum_{a_1+a_2+\cdots+a_k=m,\atop \forall a_i\in\N^*}\dfrac{(-n)^k}{k!a_1a_2\cdots a_k}=(-1)^m\dbinom{n}{m},
\end{equation}
\end{lem}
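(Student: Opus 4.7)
The plan is to identify the left-hand side of \eqref{eq:lem:app1} as the coefficient of $x^m$ in a simple generating function, and then read off $(-1)^m\binom{n}{m}$ directly by the binomial theorem.

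First I would introduce the formal power series
$$L(x) := -\log(1-x) = \sum_{a \ge 1} \frac{x^a}{a}.$$
Expanding the product of $k$ identical copies of $L(x)$, the coefficient of $x^m$ is exactly the compositional inner sum that appears in \eqref{eq:lem:app1}:
$$[x^m]\, L(x)^k = \sum_{a_1+a_2+\cdots+a_k=m,\ a_i\in\N^*} \frac{1}{a_1 a_2 \cdots a_k}.$$
Since $L(x) = O(x)$, we have $L(x)^k = O(x^k)$, so only indices $k \le m$ contribute to $[x^m]$; this handles the range of summation in \eqref{eq:lem:app1}.

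Next I would weight by $(-n)^k/k!$ and sum over $k\ge 0$. Because $L(x)$ has zero constant term, the composition with the exponential series is a well-defined formal power series, and
$$\sum_{k\ge 0} \frac{(-n)^k}{k!}\, L(x)^k \;=\; \exp\bigl(-n\,L(x)\bigr) \;=\; \exp\bigl(n\log(1-x)\bigr) \;=\; (1-x)^n.$$
Extracting $[x^m]$ on both sides and invoking the truncation observed above, the left-hand side produces precisely the double sum in \eqref{eq:lem:app1}. The binomial theorem gives $[x^m](1-x)^n = (-1)^m \binom{n}{m}$, which is the right-hand side. The hypothesis $n>m$ is used only to guarantee that this binomial coefficient is nonzero; viewed as a polynomial identity in $n$, the formula actually holds for every value of $n$.

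There is no genuine obstacle here: once the generating-function identification is made, everything is a routine manipulation of formal power series. The only point requiring a sentence of care is the interchange of the two summations when extracting $[x^m]$, which is immediate from $L(x)^k = O(x^k)$ since then only finitely many values of $k$ contribute to any given coefficient.
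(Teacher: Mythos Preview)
Your argument is correct and uses the same generating-function approach as the paper: both identify the inner sum as $[x^m]\,(-\log(1-x))^k$ and then exponentiate. The only difference is that the paper introduces an auxiliary variable $y$ and passes through the signless Stirling numbers of the first kind, obtaining $\sum_{k}\frac{z^k}{m!}c(m,k)=\frac{1}{m!}z(z+1)\cdots(z+m-1)$ before substituting $z=-n$, whereas you go directly to $\exp(-nL(x))=(1-x)^n$ and extract $[x^m]$; your route is slightly more streamlined but the underlying idea is identical.
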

\begin{proof}
Let $m\ge k \ge 0$, we have 
\begin{equation*}
    \begin{split}
        \sum_{a_1+\cdots+a_k=m,\atop \forall a_i\in\N^*}\dfrac{(zy)^k}{k!a_1a_2\cdots a_k}=& [x^m]\sum_{m\ge k}\sum_{a_1+\cdots+a_k=m,\atop \forall a_i\in\N^*}\dfrac{(zy)^kx^m}{k!a_1a_2\cdots a_k} \\
        =& [x^m]\sum_{m\ge k}\sum_{a_1+\cdots+a_k=m,\atop \forall a_i\in\N^*}\dfrac{(zy)^k}{k!}\dfrac{x^{a_1}}{a_1}\cdots \dfrac{x^{a_k}}{a_k}\\
        =& [x^m]\sum_{m\ge k}\dfrac{(zy)^k}{k!}\sum_{a_1+\cdots+a_k=m,\atop \forall a_i\in\N^*}\dfrac{x^{a_1}}{a_1}\cdots \dfrac{x^{a_k}}{a_k}\\
        =&[x^m]\dfrac{(zy)^k}{k!}(-\ln{(1-x)})^k .
    \end{split}
\end{equation*}
Thus, 
\begin{equation*}
    \begin{split}
        \sum_{a_1+\cdots+a_k=m,\atop \forall a_i\in\N^*}\dfrac{z^k}{k!a_1a_2\cdots a_k}
        =&[x^my^k]\dfrac{(zy)^k}{k!}(-\ln{(1-x)})^k\\
        =&[x^my^k]\sum_{i\ge 0}\dfrac{(-zy\ln{(1-x)})^i}{i!}\\
        =&[x^my^k]\exp{(-zy\ln{(1-x)})}\\
        =&[x^my^k](1-x)^{-zy}\\
        =&[y^k](-1)^m\dbinom{-zy}{m}\\
        =&\dfrac{z^k}{m!}\sum_{S\subseteq[m-1],|S|=m-k}\prod_{i\in S}i\\
        =&\dfrac{z^k}{m!}c(m,k),
    \end{split}
\end{equation*}

where $c(m,k)$ is the well-known signless stirling number of the first kind (see, e.g., \cite{stanley2011enumerative}). Hence,
$$\sum_{k=0}^m\sum_{a_1+\cdots+a_k=m,\atop \forall a_i\in\N^*}\dfrac{z^k}{k!a_1a_2\cdots a_k}=\sum_{k=1}^m\dfrac{z^k}{m!}c(m,k)=\dfrac{1}{m!}z(z+1)\cdots(z+m-1).$$

Taking $z=-n$ with $n>m$, we have 
$$\sum_{k=0}^m\sum_{a_1+\cdots+a_k=m,\atop \forall a_i\in\N^*}\dfrac{(-n)^k}{k!a_1a_2\cdots a_k}=\dfrac{1}{m!}(-n)(-n+1)\cdots(-n+m-1)=(-1)^m\dbinom{n}{m}.$$
\end{proof}
Note that the method used in the proof of Lemma \ref{lem:app1} is called the generating function approach, which is widely used in enumerative conbinatorics, see e.g., \cite[Chapter 1]{stanley2011enumerative}.

\begin{lem}\label{lem:app2}
Let $1\leq r \leq n$ and $b=(b_1,\cdots,b_r)\in (\N^*)^r.$ Then 
\begin{equation}\label{eq:lem:app2}
    \begin{split}
        &\sum_{k=r}^n\displaystyle\sum_{a_1+\cdots +a_k=n,\atop \forall a_i\in \N^*}(-1)^{n-k}\dfrac{\ \ (n-k)!}{\ \ n!n^{n-k}a_1\cdots a_k}\dbinom{n-r}{k-r}a_1^{b_1}\cdots a_r^{b_r}\\
        =& \dfrac{(-1)^r(n-r)!}{n!n^{n-r}}\sum_{s=r}^n(-1)^s\dbinom{n}{s}\left(\displaystyle\sum_{a_1+\cdots a_r=s,\atop \forall a_i\in \N^*}a_1^{b_1-1}\cdots a_r^{b_r-1}\right).
    \end{split}
\end{equation}
\end{lem}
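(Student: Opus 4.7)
The plan is to reorganize the left-hand side by separating each composition $(a_1, \ldots, a_k)$ of $n$ into its first $r$ coordinates and its last $k - r$ coordinates. Setting $s \defas a_1 + \cdots + a_r$, we automatically have $r \leq s \leq n$ and $0 \leq k - r \leq n - s$, because each $a_i \in \N^*$. The monomial $a_1^{b_1} \cdots a_r^{b_r}$ touches only the first block, so the weight $(a_1 \cdots a_k)^{-1} a_1^{b_1} \cdots a_r^{b_r}$ factors cleanly as $a_1^{b_1 - 1} \cdots a_r^{b_r - 1} \cdot (a_{r+1} \cdots a_k)^{-1}$.

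With this split, interchange the order of summation to place $s$ outermost. Writing $T_s \defas \sum_{a_1 + \cdots + a_r = s,\, a_i \in \N^*} a_1^{b_1 - 1} \cdots a_r^{b_r - 1}$, it suffices to prove, for each fixed $s$ with $r \leq s \leq n$, that
\[
\sum_{j = 0}^{n - s} \frac{(-1)^{n - r - j}(n - r - j)!}{n! \, n^{n - r - j}} \binom{n - r}{j} \sum_{\substack{c_1 + \cdots + c_j = n - s \\ c_i \in \N^*}} \frac{1}{c_1 \cdots c_j} = \frac{(-1)^{r + s}(n - r)!}{n! \, n^{n - r}} \binom{n}{s},
\]
where $j \defas k - r$. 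Expanding $\binom{n - r}{j} = (n - r)!/(j!(n - r - j)!)$ makes the factorials cancel and collapses the prefactor into $\frac{(-1)^{n - r}(n - r)!}{n! \, n^{n - r}} \cdot \frac{(-n)^j}{j!}$ (using $(-1)^{-j} = (-1)^j$), pulling the $s$-independent piece outside the remaining double sum.

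The bracket that remains is exactly the sum on the left of Lemma \ref{lem:app1} with $m$ replaced by $n - s$. Since $s \geq r \geq 1$ forces $n - s \leq n - 1 < n$, the hypothesis $n > m$ of that lemma holds, and it evaluates the bracket to $(-1)^{n - s} \binom{n}{n - s} = (-1)^{n - s} \binom{n}{s}$. Collecting signs via $(-1)^{n - r}(-1)^{n - s} = (-1)^{r + s}$ produces precisely the desired right-hand coefficient of $T_s$, and summing over $s$ from $r$ to $n$ closes the identity. I expect the main obstacle to be pure bookkeeping: tracking the alternating factorial factors through the reduction and properly handling the degenerate case $s = n$, where $j$ is forced to be $0$ and the inner composition sum is the empty product equal to $1$. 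All genuine combinatorial content sits inside Lemma \ref{lem:app1}; the present identity is a clean reorganization that isolates a sub-sum matching its hypothesis.
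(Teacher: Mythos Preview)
Your proposal is correct and follows essentially the same approach as the paper's own proof: split each composition into its first $r$ and last $k-r$ parts, introduce $s=a_1+\cdots+a_r$, substitute $j=k-r$, interchange the $s$- and $j$-sums, and then invoke Lemma~\ref{lem:app1} with $m=n-s$ to collapse the tail sum to $(-1)^{n-s}\binom{n}{s}$. Your explicit mention of the boundary case $s=n$ (where $j=0$ and the inner composition sum is the empty product) is a nice touch that the paper leaves implicit.
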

\begin{proof}
Let $1\leq r\leq k\leq n$ and $(b_1,\cdots,b_r)\in (\N^*)^r$, then
\begin{equation}\label{eq:lem:app2-bis}
    \begin{split}
        &\displaystyle\sum_{a_1+\cdots +a_k=n,\atop \forall a_i\in \N^*}(-1)^{n-k}\dfrac{\ \ (n-k)!}{\ \ n!n^{n-k}a_1\cdots a_k}\dbinom{n-r}{k-r}a_1^{b_1}\cdots a_r^{b_r}\\
        =& \sum_{s=r}^{n-k+r}\displaystyle\sum_{a_1+\cdots +a_r=s, a_{r+1}+\cdots +a_k=n-s,\atop \forall a_i\in \N^*} (-1)^{n-k}\dfrac{\ \ (n-k)!}{\ \ n!n^{n-k}a_1\cdots a_k}\dbinom{n-r}{k-r}a_1^{b_1}\cdots a_r^{b_r}\\
        =& \dfrac{(-1)^{n-r}(n-r)!}{n!n^{n-r}}\sum_{s=r}^{n-k+r}\displaystyle\sum_{a_1+\cdots +a_r=s, a_{r+1}+\cdots +a_k=n-s,\atop \forall a_i\in \N^*}a_1^{b_1-1}\cdots a_r^{b_r-1} \dfrac{(-n)^{k-r}}{a_{r+1}\cdots a_k (k-r)!}\\
        =& \dfrac{(-1)^{n-r}(n-r)!}{n!n^{n-r}}\sum_{s=r}^{n-k+r}\displaystyle\sum_{a_1+\cdots +a_r=s,\atop \forall a_i\in \N^*}a_1^{b_1-1}\cdots a_r^{b_r-1}\left(\displaystyle\sum_{a_{r+1}+\cdots +a_k=n-s,\atop \forall a_i\in \N^*}\dfrac{(-n)^{k-r}}{a_{r+1}\cdots a_k (k-r)!}\right).
    \end{split}
\end{equation}
It follows that
\begin{equation*}
    \begin{split}
        &\sum_{k=r}^n\displaystyle\sum_{a_1+\cdots +a_k=n,\atop \forall a_i\in \N^*}(-1)^{n-k}\dfrac{(n-k)!}{n!n^{n-k}a_1\cdots a_k}\dbinom{n-r}{k-r}a_1^{b_1}\cdots a_r^{b_r}\\
        \overset{\eqref{eq:lem:app2-bis}}{=}& \dfrac{(-1)^{n-r}(n-r)!}{n!n^{n-r}}\sum_{k=r}^n\sum_{s=r}^{n-k+r}\displaystyle\sum_{a_1+\cdots +a_r=s,\atop \forall a_i\in \N^*}a_1^{b_1-1}\cdots a_r^{b_r-1}\left(\displaystyle\sum_{a_{r+1}+\cdots +a_k=n-s,\atop \forall a_i\in \N^*}\dfrac{(-n)^{k-r}}{a_{r+1}\cdots a_k (k-r)!}\right)\\
         =& \dfrac{(-1)^{n-r}(n-r)!}{n!n^{n-r}}\sum_{k=0}^{n-r}\sum_{s=r}^{n-k}\displaystyle\sum_{a_1+\cdots +a_r=s,\atop \forall a_i\in \N^*}a_1^{b_1-1}\cdots a_r^{b_r-1}\left(\displaystyle\sum_{a_{r+1}+\cdots +a_{k+r}=n-s,\atop \forall a_i\in \N^*}\dfrac{(-n)^k}{a_{r+1}\cdots a_{k+r} k!}\right)\\
         =& \dfrac{(-1)^{n-r}(n-r)!}{n!n^{n-r}}\sum_{s=r}^n\sum_{k=0}^{n-s}\displaystyle\sum_{a_1+\cdots +a_r=s,\atop \forall a_i\in \N^*}a_1^{b_1-1}\cdots a_r^{b_r-1}\left(\displaystyle\sum_{a_{r+1}+\cdots +a_{k+r}=n-s,\atop \forall a_i\in \N^*}\dfrac{(-n)^k}{a_{r+1}\cdots a_{k+r} k!}\right)\\
          =& \dfrac{(-1)^{n-r}(n-r)!}{n!n^{n-r}}\sum_{s=r}^n\displaystyle\sum_{a_1+\cdots +a_r=s,\atop \forall a_i\in \N^*}a_1^{b_1-1}\cdots a_r^{b_r-1}\left(\sum_{k=0}^{n-s}\displaystyle\sum_{a_{r+1}+\cdots +a_{k+r}=n-s,\atop \forall a_i\in \N^*}\dfrac{(-n)^k}{a_{r+1}\cdots a_{k+r} k!}\right)\\
          \overset{\eqref{eq:lem:app1}}{=}& \dfrac{(-1)^{n-r}(n-r)!}{n!n^{n-r}}\sum_{s=r}^n(-1)^{n-s}\dbinom{n}{n-s}\left(\displaystyle\sum_{a_1+\cdots a_r=s,\atop \forall a_i\in \N^*}a_1^{b_1-1}\cdots a_r^{b_r-1}\right)\\
          =&\dfrac{(-1)^r(n-r)!}{n!n^{n-r}}\sum_{s=r}^n(-1)^s\dbinom{n}{s}\left(\displaystyle\sum_{a_1+\cdots a_r=s,\atop \forall a_i\in \N^*}a_1^{b_1-1}\cdots a_r^{b_r-1}\right).
    \end{split}
\end{equation*}
\end{proof}

We will use the next notations in the rest of the paper: Let $b\in (\N^*)^r$ with $b_1+b_2+\cdots+b_r=n$, we define the set $\calA_b=\{c_j~|~c_j=\sum_{k=1}^j b_k,j\in[r]\}\subset [n]$ with $n\in \calA_b$; Let $S$ be a non-empty subset of $[n]$, we define the couple $(\calS,\theta)$ where $\theta=(\theta_1,\theta_2,\cdots,\theta_n)\in \calS^n$ such that for each $c_j\in \calA_b, \theta_{c_j}\ge \theta_k, \forall k<c_j$ and $\theta_{c_j}<\theta_k, \forall k>c_j$ and $\theta_n=\max\calS$; Let $\calC$ be the set of all couples $(\calS,\theta)$; Let $wt(\calS,\theta)$ be the weight of $(\calS,\theta)\in \calC$ defined by
$$wt(\calS,\theta)=(-1)^{|\calS|}.$$ 
We have the following lemmas to simplify the the right hand side of \eqref{eq:lem:app2}.

\begin{lem}\label{lem:2.3}
Let $n>r>0$ and $b\in (\N^*)^r$ with $b_1+b_2+\cdots+b_r=n$. Then  
\begin{equation}\label{eq:app1}
   \sum_{s=r}^n(-1)^s\dbinom{n}{s}\displaystyle\sum_{a_1+\cdots a_r=s,\atop \forall a_i\in \N^*}a_1^{b_1-1}\cdots 
a_r^{b_r-1}=\displaystyle\sum_{(S,\theta)\in\calC}wt(\calS,\theta).
\end{equation}
\end{lem}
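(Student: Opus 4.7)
The plan is to establish \eqref{eq:app1} by a direct double counting of the pairs $(\calS,\theta)\in\calC$. I will group these pairs by their first coordinate $\calS$ and then enumerate the admissible $\theta$'s for each fixed $\calS$, matching the resulting count to the inner sum on the left-hand side of \eqref{eq:app1}.

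First, fix a non-empty $\calS\subseteq[n]$ and analyse the structure of an admissible $\theta\in\calS^n$. Writing $v_j:=\theta_{c_j}$ for each $c_j\in\calA_b$, the condition at $c_{j-1}$ gives $v_{j-1}=\theta_{c_{j-1}}<\theta_k$ for every $k>c_{j-1}$, while the condition at $c_j$ gives $\theta_{c_j}\ge\theta_k$ for every $k<c_j$. Specialising these to $k=c_j$ produces the strict chain $v_1<v_2<\cdots<v_r$ inside $\calS$, and the boundary condition $\theta_n=\max\calS$ pins down $v_r=\max\calS$. Moreover, for every free position $k$ with $c_{j-1}<k<c_j$, the same two inequalities give $v_{j-1}<\theta_k\le v_j$, so $\theta_k$ must lie in $\calS\cap(v_{j-1},v_j]$ but is otherwise unconstrained.

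Next, I would parametrise the choices of $(v_1,\ldots,v_r)$ by the composition $(a_1,\ldots,a_r)$ of $s:=|\calS|$ defined by $a_j:=|\calS\cap(v_{j-1},v_j]|$ (with $v_0:=-\infty$). Since $v_r=\max\calS$, the intervals $(v_{j-1},v_j]$ partition $\calS$, so this map is a bijection between strictly increasing sequences $v_1<\cdots<v_r=\max\calS$ in $\calS$ and compositions of $s$ into $r$ positive parts. For a fixed composition, each of the $b_j-1$ free positions in the $j$-th block has exactly $a_j$ admissible values, yielding $\prod_{j=1}^{r}a_j^{b_j-1}$ admissible fillings, and summing over compositions gives
$$\#\{\theta:(\calS,\theta)\in\calC\}=\sum_{a_1+\cdots+a_r=s,\,a_i\in\N^*}a_1^{b_1-1}\cdots a_r^{b_r-1}.$$

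Finally, I would sum over $\calS$ weighted by $(-1)^{|\calS|}$: there are $\binom{n}{s}$ subsets of size $s$, and the inner sum vanishes for $s<r$ (no composition into $r$ positive parts exists), so the total collapses exactly to the left-hand side of \eqref{eq:app1}. The main obstacle is the local analysis of the block structure: one must verify that the defining inequalities truly decouple across blocks and that the extra boundary condition $\theta_n=\max\calS$ is precisely what makes the composition parametrisation a bijection rather than only an injection. Neither point is deep, but both are easy to gloss over.
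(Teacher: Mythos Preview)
Your proposal is correct and follows essentially the same route as the paper's proof: both arguments fix $\calS$, observe that the constraints force $\theta_{c_j}=d_{i_j}$ for an increasing chain in $\calS$ ending at $\max\calS$, parametrise those chains by compositions $(a_1,\ldots,a_r)$ of $s=|\calS|$, and count the free coordinates in each block as $a_j^{b_j-1}$. Your derivation of the block decoupling from the two inequalities at each $c_j$ is slightly more explicit than the paper's ``balls in boxes'' description, but the content is the same.
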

\begin{proof}
$\rhd$ First, given a subset $\calS\subset [n]$, let us denote the subset $\calC_{\calS}=\{(\calS,\theta): \theta\in \calS^n\}$ of $\calC$. Then we will show that there is a correspondence between the number of elements of $P_{\calS}$
and $\sum_{a_1+\cdots a_r=s,\atop \forall a_i\in \N^*}a_1^{b_1-1}\cdots 
a_r^{b_r-1}.$

Consider the next procedure: suppose that $\calS=\{d_1,d_2,\cdots,d_s\}\subset[n]$ where $d_1<d_2<\cdots<d_s$. By dividing $\calS$ into $r$ parts in the order of $\calS$ as 
\begin{equation}\label{eq:div}
\underbrace{d_1,\cdots,d_{i_1}}_{a_1}\Big{|}\underbrace{d_{i_1+1},\cdots,d_{i_2}}_{a_2}\Big{|}\cdots\Big{|}\underbrace{d_{i_{r-1}+1},\cdots,d_{i_r}}_{a_r},
\end{equation}
where $a_i\in \N^* (i\in[r])$ denotes the number of elements of the $i$-th part and $d_{i_r}=d_s$. Now, let us consider $\theta_i, i\in [n]$ as the number of balls in the $i$th box, then there are two cases:\\ 
$\bullet$ For the values of $\theta_k$ with $k\in \mathcal{A}_b=\{c_1,\ldots, c_r\}$, then we have $\theta_{c_j}=d_{i_j}, \forall j\in [r]$, i.e., only one choice to put $d_{i_j}$ balls into the $c_j$-th ($c_j\in \mathcal{A}_b$) box as illustrated as follows:
$$\begin{tabular}{rcccccccccc}
&  &  & $d_{i_1}$&$\cdots$&$d_{i_2}$ &$\cdots$ & $d_{i_j}$ &$\cdots$ & $d_{i_r}$&\\
&  &  & $\downarrow$& & $\downarrow$ & & $\downarrow$ & & $\downarrow$& \\
& ( $\tiny\boxed{\phantom{O}}$ &$\cdots$&$\tiny\boxed{\phantom{O}}$&$\cdots$ & $\tiny\boxed{\phantom{O}}$ &$\cdots$ & $\tiny\boxed{\phantom{O}}$ &$\cdots$ & $\tiny\boxed{\phantom{O}}$&) \\[-2pt]
& ~~1& & $c_1$ &$\cdots$& $c_2$&$\cdots$ & $c_j$ &$\cdots$ & $c_r$& 
\end{tabular}$$
$\bullet$ For the values of $\theta_k$ with $k\notin \mathcal{A}_b$, we have: 
 \begin{enumerate}
\item[(i)] If $j=1$, then there are $c_1-1=b_1-1$ boxes on the left of the $c_1$-th box, by the definition of $\theta_k (k < c_1)$, each box $k$ can only have  $d_1,\cdots,d_{i_1}$ balls (i.e., $a_1$ possible choices), this implies that there are totally $a_1^{b_1-1}$ ways to fill the boxes with $k<c_1$.
\item[(ii)] Similarly, if $2\le j\le r$, then there are $c_{j}-c_{j-1}-1=b_{j}-1$ boxes between
 the $c_{j-1}$-th box and the $c_{j}$-th boxe. By the definition of $\theta_k (c_{j-1} < k < c_j)$, each box can only have $d_{i_{j-1}+1},\cdots,d_{i_j}$ balls (i.e., $a_j$ possible choices), this implies that there are totally $a_j^{b_j-1}$ ways to fill the boxes with $c_{j-1} < k < c_j$.
\end{enumerate}
Therefore, for each division \eqref{eq:div}, there are totally $a_1^{b_1-1}\cdots
a_r^{b_r-1}$ choices; then for all divisions on the set $\calS$, we have totally $\sum_{a_1+\cdots a_r=s, a_i\in \N^*}a_1^{b_1-1}\cdots 
a_r^{b_r-1}$ choices. Clearly, for any given $\calS\subset [n]$ arranged in increasing order, all possible $\theta$ constructed using the above procedure consists of a couple $(\calS,\theta) \in \calC_\calS$; conversely, any couple $(\calS,\theta)\in \calC_\calS$ can be interpreted as in above procedure. Hence, we have 
%If collecting all the $\theta$ produced on $\calS$ by the above procedure as $\calS_{\theta}$. Then it is easy to see that $$\calS_{\theta}=\calC_\calS.$$
$$|\calC_\calS|=\sum_{a_1+\cdots a_r=s,\atop \forall a_i\in \N^*}a_1^{b_1-1}\cdots 
a_r^{b_r-1}.$$
$\rhd$ Now, consider all subsets $\calS$ of $[n]$, we have 
\begin{equation*}
\begin{split}
    \sum_{s=r}^n(-1)^s\dbinom{n}{s}\displaystyle\sum_{a_1+\cdots a_r=s,\atop \forall a_i\in \N^*}a_1^{b_1-1}\cdots 
a_r^{b_r-1}=&\displaystyle\sum_{\calS\subset [n],|\calS|\ge r}(-1)^{|\calS|}|\calC_\calS|\\
 =&\displaystyle\sum_{\calS\subset [n],|\calS|\ge r}\sum_{(\calS,\theta)\in\calC_\calS}wt(\calS,\theta)\\
 =&\displaystyle\sum_{(\calS,\theta)\in\calC}wt(\calS,\theta). 
\end{split}
\end{equation*}
\end{proof}
\begin{defn}[Involution, see e.g., \cite{stanley2011enumerative,loehr2011bijective}]
 An \emph{involution} on a set $X$ is a function $\tau$ such that $\tau$ is a bijection on $X$ and $\tau=\tau^{-1}.$
\end{defn}
\begin{lem}\label{lem:app3}
There is an involution $\phi$ on $\calC$ such that $wt(\calS,\theta)=-wt(\phi(\calS,\theta)).$
\end{lem}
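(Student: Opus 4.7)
The plan is to construct a fixed-point-free sign-reversing involution $\phi:\calC\to\calC$. Since $wt(\calS,\theta)=(-1)^{|\calS|}$ is nonzero, the requirement $wt(\phi(\calS,\theta))=-wt(\calS,\theta)$ forces $\phi$ to have no fixed points and to change $|\calS|$ by an odd integer; the natural choice is $\pm 1$. The strategy is to toggle a canonically chosen element of $[\theta_n]$ in or out of $\calS$, with a separate rule for the exceptional case when no such element is available.

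For $(\calS,\theta)\in\calC$, set $D=[\theta_n]\setminus\mathrm{Im}(\theta)$. When $D\neq\emptyset$, I would define $\phi(\calS,\theta):=(\calS\triangle\{d^*\},\theta)$ with $d^*:=\min D$. Since $d^*\notin\mathrm{Im}(\theta)$, one has $d^*\neq\theta_{c_j}$ for all $j$, so $d^*$ lies strictly between two consecutive part maxima of the partition of $\calS$ (or strictly below $\theta_{c_1}$); adding or removing $d^*$ therefore neither alters the part maxima $\theta_{c_j}$ nor violates $\theta_n=\max\calS$ (as $d^*<\theta_n$), so $(\calS\triangle\{d^*\},\theta)\in\calC$. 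This toggle is self-inverse because $\theta$, hence $D$ and $d^*$, are unchanged, while $|\calS|$ changes by one.

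When $D=\emptyset$ one has $\mathrm{Im}(\theta)=[\theta_n]$ and hence $\calS=[\theta_n]$; these are the \emph{surjective} configurations. For these I would define $\phi$ via a suffix-shift on the strictly increasing sequence $\theta_{c_1}<\cdots<\theta_{c_r}=\theta_n$: let $\ell\geq 1$ be the smallest integer such that uniformly shifting $\theta_{c_{r-\ell+1}},\dots,\theta_{c_r}$ by $+1$ or by $-1$ yields another surjective configuration in $\calC$, with any non-special $\theta_k$ whose part-range has changed reassigned to its new range. Exactly one of the two directions is admissible for the minimal $\ell$, and one sets $\phi(\calS,\theta)$ equal to the resulting configuration; this changes $|\calS|$ by one and is again sign-reversing.

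The main obstacle is the surjective case. I would have to show: (i) a valid shift always exists, which crucially uses $n>r$—otherwise $\theta_{c_j}=j$ is forced, leaving no room to shift and producing a fixed point, consistent with the fact that the signed sum is $(-1)^n\neq 0$ precisely when $n=r$; (ii) the minimal $\ell$ and its admissible direction are uniquely determined by $(\calS,\theta)$; and (iii) applying $\phi$ to the image recovers the original. Part (iii) is the most delicate: the idea is that shifting a maximal consecutive suffix of length $\ell$ creates exactly the obstruction that prevents any shift of length less than $\ell$ on the image, thereby forcing the inverse shift on the image to use the same length $\ell$ in the opposite direction, which returns to $(\calS,\theta)$.
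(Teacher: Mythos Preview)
Your first case coincides with the paper's. One checks that $D=[\theta_n]\setminus\mathrm{Im}(\theta)$ equals the paper's set $\calU\cup\calV$ (where $\calU=\calS\setminus\mathrm{Im}(\theta)$ and $\calV=\{w\in[n]\setminus\calS:w<\max\calS\}$), so both proofs split into the same ``toggle'' versus ``surjective'' dichotomy; the only cosmetic difference is that the paper toggles $\max D$ while you toggle $\min D$. This part of your argument is correct.

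The genuine gap is in the surjective case $D=\emptyset$. Your suffix-shift rule is not well-defined as written: the phrase ``any non-special $\theta_k$ whose part-range has changed reassigned to its new range'' does not determine $\theta'$ uniquely when the new range contains more than one element, and when the range shrinks there is no specified way to relocate values that fall outside it. Without an explicit, reversible rule on the non-special coordinates you cannot verify (ii) or (iii), and (i) is also unproven---you have not shown that some $\ell$ always yields a surjective target, nor that the minimal such $\ell$ admits exactly one direction. At present this half of the construction is a heuristic, not a proof.

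The paper's treatment of the surjective case is different and much simpler in intent: it modifies \emph{only} $\theta_n$ by $\pm 1$ (inserting $1+\max\calS$ into $\calS$ or deleting $\max\calS$) according to whether $\max\calS$ is the largest repeated value of $\theta$. Keeping all other coordinates fixed is what makes the involution easy to read off. One should note, however, that the paper's rule is delicate when $b_r=1$: for instance with $n=3$, $b=(2,1)$, the pair $(\{1,2,3\},(1,2,3))$ is sent to $(\{1,2\},(1,2,2))$, which violates $\theta_{c_1}<\theta_{c_2}$. So your instinct that more than one special coordinate may need to move is not unfounded; but to turn that instinct into a proof you must replace the vague ``reassignment'' by a concrete bijection and then verify $\phi\circ\phi=\mathrm{id}$ directly.
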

\begin{proof}
For any $(\calS,\theta)\in \calC$, let $\Phi(\calS,\theta)\defas\{\theta_k\ |\ k=1,2,\cdots,n\}$, then $\Phi(\calS,\theta)\subset \calS$ and $\max \calS\in \Phi(\calS,\theta)$, by the definition of $(\calS,\theta)$. Let $\calU=\calS\setminus \Phi(\calS,\theta)$ and $\calV=\{w\ |\ w\in [n]\setminus \calS, w<\max \calS\}.$\\
$\rhd$ If $\calU\cup \calV \ne \emptyset$ and let $u=\max\{ \calU\cup \calV\} $.
\begin{itemize}
    \item[(i)] If $u\in \calS$, then $u\in \calU$ and $u\notin \Phi(\calS,\theta)$, thus we can well define $$\phi(\calS,\theta)\defas(\calS\setminus \{u\},\theta);$$ \item[(ii)] If $u\notin \calS$, then we can define $$\phi(\calS,\theta)\defas (\calS\cup\{u\},\theta).$$ 
\end{itemize}
$\rhd$ If $\calU\cup \calV = \emptyset$, then $\calS = \Phi(\calS,\theta)=\{1,2,\cdots,|\calS|\}$. Let $\calW$ be the set of positive integers appearing at least twice in $\theta$; Let $e_n=(0,.\cdots0,1)\in \R^n$. Consider $\calW$ in the following two cases:
\begin{itemize}
    \item If $\calW\ne\emptyset$ and let $w=\max \calW.$ 
    \begin{itemize}
        \item[(i)] If $w=\max \calS$, then $w<n$ and we can define $$\phi(\calS,\theta)\defas(\calS\cup\{1+\max \calS\},\theta+e_n);$$ \item[(ii)] if $w\ne \max \calS$, then we can define $$\phi(\calS,\theta)\defas(\calS\setminus\{\max \calS\},\theta-e_n).$$ 
    \end{itemize}
    \item If $\calW=\emptyset$, then we must have $|\calS|=n,$ and we can define $$\phi(\calS,\theta)\defas(\calS\setminus \{n\},\theta-e_n).$$
\end{itemize}
 It is easy to see that $\phi$ is an involution with $wt(\calS,\theta)=-wt(\phi(\calS,\theta))$ for any $(\calS,\theta)\in \calC$.
\end{proof}
% \begin{thm}\label{th:1}
% Let $1\leq r \leq n$ and $b=(b_1,\cdots,b_r)\in (N^*)^r.$ Then 
%  $$\sum_{k=r}^{n}\displaystyle\sum_{a_1+\cdots a_k=n,\atop \forall a_i\in \N^*}(-1)^{n-k}\dfrac{\ \ (n-k)!}{\ \ n!n^{(n-k)}a_1\cdots a_k}\dbinom{n-r}{k-r}a_1^{b_1}\cdots a_r^{b_r}=0. $$
% \end{thm}
% \begin{proof}
% Combining identities \eqref{eq:lem:app2} and \eqref{eq:app1}, we have 
% $$\sum_{k=r}^{n}\displaystyle\sum_{a_1+\cdots a_k=n,\atop \forall a_i\in \N^*}(-1)^{n-k}\dfrac{\ \ (n-k)!}{\ \ n!n^{(n-k)}a_1\cdots a_k}\dbinom{n-r}{k-r}a_1^{b_1}\cdots a_r^{b_r}=\displaystyle\sum_{(S,\theta)\in\calC}wt(S,\theta).$$
% Then, it follows from Lemma \ref{lem:app3} that $\displaystyle\sum_{(S,\theta)\in\calC}wt(S,\theta)=0,$ which yields the desired result.
% \end{proof}

\begin{thm}\label{th:1}
Let $1\leq r \leq n$ and $b=(b_1,\cdots,b_r)^T\in (\N^*)^r.$ Then 
\begin{equation}\label{eq:11}
   \dbinom{n}{b_1,\cdots,b_r}\sum_{k=r}^{n}\displaystyle\sum_{a_1+\cdots a_k=n,\atop \forall a_i\in \N^*}(-1)^{n-k}\dfrac{\ \ (n-k)!}{\ \ n!n^{(n-k)}a_1\cdots a_k}\dbinom{n-r}{k-r}a_1^{b_1}\cdots a_r^{b_r}=\delta_{nr},
\end{equation}
where $$\delta_{nr}\defas \begin{cases}
1 , & n = r\\ 
0 , & n \ne r
\end{cases}.$$
\end{thm}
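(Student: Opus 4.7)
The plan is to chain the three preceding lemmas together and handle two cases separately.

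First, I would apply Lemma~\ref{lem:app2} (equation~\eqref{eq:lem:app2}) to rewrite the left-hand side of \eqref{eq:11} as
\begin{equation*}
\binom{n}{b_1,\ldots,b_r}\,\dfrac{(-1)^r(n-r)!}{n!\,n^{n-r}}\sum_{s=r}^{n}(-1)^s\dbinom{n}{s}\sum_{a_1+\cdots+a_r=s,\atop a_i\in\N^*}a_1^{b_1-1}\cdots a_r^{b_r-1}.
\end{equation*}
This trades the triple summation indexed by $k$ for a cleaner double sum indexed by $s$ and by compositions of $s$ into exactly $r$ positive parts.

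Next I would split into the cases $n=r$ and $n>r$. When $n=r$, the constraint $b_1+\cdots+b_r=n$ that is implicit in the multinomial coefficient, together with $b_i\in\N^*$, forces $b_i=1$ for every $i$. The outer sum collapses to the single term $k=n$, the composition forces $a_i=1$ for every $i$, and using $\binom{n}{1,\ldots,1}=n!$ a direct calculation yields $n!\cdot(1/n!)=1=\delta_{nn}$.

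When $n>r$, I would invoke Lemma~\ref{lem:2.3} to rewrite the inner double sum as the signed combinatorial sum $\sum_{(\calS,\theta)\in\calC}wt(\calS,\theta)$. Then I would apply the involution $\phi$ from Lemma~\ref{lem:app3}: because $\phi$ is sign-reversing with $wt(\calS,\theta)=-wt(\phi(\calS,\theta))$, any fixed point would have weight zero, but every weight lies in $\{\pm1\}$, so $\phi$ must be fixed-point-free. Pairing the elements of $\calC$ via $\phi$ therefore yields $\sum_{(\calS,\theta)\in\calC}wt(\calS,\theta)=0=\delta_{nr}$, completing the proof.

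The only genuinely difficult step is the appeal to Lemma~\ref{lem:app3}, whose sign-reversing involution is defined by a delicate case analysis on $\calU\cup\calV$ and on the set $\calW$ of repeated entries of $\theta$; that work, however, has already been carried out in the lemma. Assembling the pieces, the proof of Theorem~\ref{th:1} is reduced to substitution from Lemma~\ref{lem:app2}, a one-line combinatorial identification via Lemma~\ref{lem:2.3}, the involution principle via Lemma~\ref{lem:app3}, and the trivial direct evaluation at $n=r$, so I do not anticipate any additional obstacles.
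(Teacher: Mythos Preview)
Your plan is correct and mirrors the paper's own proof almost exactly: the paper likewise treats $n=r$ by direct evaluation (using $a_i=b_i=1$ to get $1$), and for $n\ne r$ it combines Lemma~\ref{lem:app2} with Lemma~\ref{lem:2.3} to reach $\sum_{(\calS,\theta)\in\calC}wt(\calS,\theta)$ and then invokes the sign-reversing involution of Lemma~\ref{lem:app3} to conclude the sum vanishes. The only cosmetic difference is that the paper splits into cases before applying Lemma~\ref{lem:app2}, whereas you state the lemma first; your explicit remark that the involution has no fixed points because $wt\in\{\pm1\}$ is a nice clarification the paper leaves implicit.
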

\begin{proof}
$\rhd$ If $n=r$, it implies that $a_i=b_i=1(i\in[n])$. Then we get 
\begin{equation*}
    \dbinom{n}{b_1,\cdots,b_r}\sum_{k=r}^{n}\displaystyle\sum_{a_1+\cdots a_k=n,\atop \forall a_i\in \N^*}(-1)^{n-k}\dfrac{\ \ (n-k)!}{\ \ n!n^{(n-k)}a_1\cdots a_k}\dbinom{n-r}{k-r}a_1^{b_1}\cdots a_r^{b_r}=1.
\end{equation*}
$\rhd$ If $n \ne r$, then combining identities \eqref{eq:lem:app2} and \eqref{eq:app1}, we have 
$$\sum_{k=r}^{n}\displaystyle\sum_{a_1+\cdots a_k=n,\atop \forall a_i\in \N^*}(-1)^{n-k}\dfrac{\ \ (n-k)!}{\ \ n!n^{(n-k)}a_1\cdots a_k}\dbinom{n-r}{k-r}a_1^{b_1}\cdots a_r^{b_r}=\displaystyle\sum_{(\calS,\theta)\in\calC}wt(\calS,\theta).$$
It follows from Lemma \ref{lem:app3} that  $\forall(\calS,\theta)\in \calC$, we have $\phi(\calS,\theta)\in \calC$, thus $\displaystyle\sum_{(\calS,\theta)\in\calC}wt(\calS,\theta)=0,$ then

$$\dbinom{n}{b_1,\cdots,b_r}\sum_{k=r}^{n}\displaystyle\sum_{a_1+\cdots a_k=n,\atop \forall a_i\in \N^*}(-1)^{n-k}\dfrac{\ \ (n-k)!}{\ \ n!n^{(n-k)}a_1\cdots a_k}\dbinom{n-r}{k-r}a_1^{b_1}\cdots a_r^{b_r}=0. $$
\end{proof}
\begin{thm}\label{th:2}
Let $x\in \R^n$, then we have 
\begin{equation}\label{eq:9}
\prod_{i=1}^n x_i = 
\sum_{\alpha\in B(n,n)}(-1)^{n-\|\alpha\|_0}\frac{(n-\|\alpha\|_0)!}{\ \ n!n^{(n-\|\alpha\|_0)}\displaystyle\prod\limits_{\alpha_i\ne 0}\alpha_i}\ \ \langle \alpha, x\rangle^n,
 \end{equation}
\end{thm}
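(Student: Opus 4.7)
The plan is to expand the right-hand side of \eqref{eq:9} using the multinomial identity employed in deriving \eqref{eq:iden}, interchange summations so that $\alpha$ becomes the inner index, and apply Theorem \ref{th:1} to show that all monomials other than $x_1\cdots x_n$ vanish.

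Concretely, the same derivation used for \eqref{eq:iden} gives $\langle\alpha, x\rangle^n = \sum_{\beta\in B(n,n)}\binom{n}{\beta}\alpha^\beta x^\beta$ for each $\alpha\in B(n,n)$. Substituting this into the right-hand side of \eqref{eq:9} and swapping the order of summation produces
\[
\sum_{\beta\in B(n,n)} \binom{n}{\beta}\, x^\beta \left(\sum_{\alpha\in B(n,n)} (-1)^{n-\|\alpha\|_0}\frac{(n-\|\alpha\|_0)!}{n!\, n^{n-\|\alpha\|_0} \prod_{\alpha_i\ne 0}\alpha_i}\, \alpha^\beta\right).
\]
The main task is to show that the inner sum multiplied by $\binom{n}{\beta}$ equals $\delta_{n,\|\beta\|_0}$.

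To this end, I fix $\beta\in B(n,n)$ with $r := \|\beta\|_0$ and denote its nonzero entries by $b_1,\ldots,b_r$ at positions $i_1<\cdots<i_r$. The convention $0^0=1$ forces $\alpha^\beta = 0$ whenever $\text{supp}(\alpha)\not\supseteq\text{supp}(\beta)$. For $\alpha$ with $\text{supp}(\alpha)\supseteq\text{supp}(\beta)$, I parametrize by $k = \|\alpha\|_0\in\{r,r+1,\ldots,n\}$, by the values $a_1,\ldots,a_r$ at the positions $i_1,\ldots,i_r$, and by the selection of the $k-r$ other nonzero coordinates inside $[n]\setminus\text{supp}(\beta)$ together with their values $a_{r+1},\ldots,a_k$. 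Under this parametrization, $\alpha^\beta = a_1^{b_1}\cdots a_r^{b_r}$, $\prod_{\alpha_i\ne 0}\alpha_i = a_1\cdots a_k$, and summing over the $\binom{n-r}{k-r}$ position choices reproduces the left-hand side of \eqref{eq:11} without its multinomial prefactor. Invoking Theorem \ref{th:1} then immediately yields
\[
\binom{n}{\beta}\sum_{\alpha\in B(n,n)}(-1)^{n-\|\alpha\|_0}\frac{(n-\|\alpha\|_0)!}{n!\, n^{n-\|\alpha\|_0}\prod_{\alpha_i\ne 0}\alpha_i}\,\alpha^\beta = \delta_{n,r}.
\]

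Finally, since $\beta\in B(n,n)$ has entries summing to $n$ with every nonzero entry at least $1$, the equality $r = n$ forces $\beta=(1,1,\ldots,1)$, so only the monomial $x_1\cdots x_n$ survives and \eqref{eq:9} follows. The main obstacle is the parametrization step: I must verify that the sum over $\alpha\in B(n,n)$ with $\text{supp}(\alpha)\supseteq\text{supp}(\beta)$ indeed re-indexes to the ordered composition sum of Theorem \ref{th:1}, with the binomial factor $\binom{n-r}{k-r}$ precisely accounting for the placement of the $k-r$ extra nonzero coordinates among the $n-r$ positions outside $\text{supp}(\beta)$, and that $\binom{n}{\beta}=\binom{n}{b_1,\ldots,b_r}$ (using $0!=1$) so that the multinomial prefactor matches exactly.
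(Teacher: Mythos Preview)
Your proposal is correct and follows essentially the same route as the paper's own proof: expand each $\langle\alpha,x\rangle^n$ via the multinomial identity, swap the sums, and for a fixed $\beta$ with $r=\|\beta\|_0$ re-index the surviving $\alpha$'s (those with $\operatorname{supp}(\alpha)\supseteq\operatorname{supp}(\beta)$) as ordered compositions $(a_1,\ldots,a_k)$, picking up the factor $\binom{n-r}{k-r}$ from the placement of the extra nonzero coordinates, and then invoke Theorem~\ref{th:1}. The only cosmetic difference is that the paper assumes without loss of generality that the nonzero entries of $\beta$ occupy the first $r$ positions, whereas you keep them at arbitrary positions $i_1<\cdots<i_r$; the argument is identical either way.
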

where $\|\cdot\|_0$ is the zero norm. \footnote{Let $x\in \R^n$, $\|x\|_0$ is the number of nonzero elements of $x$.} 
\begin{proof}
Denote the right hand side of \eqref{eq:9} as
\begin{equation*}
f(x)=\sum_{\alpha\in B(n,n)}(-1)^{n-\|\alpha\|_0}\frac{(n-\|\alpha\|_0)!}{\ \ n!n^{(n-\|\alpha\|_0)}\displaystyle\prod\limits_{\alpha_i\ne 0}\alpha_i}\ \ \langle \alpha,x\rangle^n.
\end{equation*}
Due to the fact that
$\langle \alpha, x \rangle^n=\displaystyle\sum_{\beta\in B(n,n)} \tbinom{n}{\beta} {\alpha}^{\beta} x^{\beta},$
then
\begin{equation*}
    \begin{split}
f(x)=&\sum_{\alpha\in B(n,n)}(-1)^{n-\|\alpha\|_0} \frac{(n-\|\alpha\|_0)!}{\ \ n!n^{(n-\|\alpha\|_0)} \displaystyle \prod \limits_{\alpha_i\ne 0} \alpha_i} \Big(\sum_{\beta\in B(n,n)} \dbinom{n}{\beta} \alpha^{\beta} x^{\beta}\Big)\\
    =&\sum_{\alpha\in B(n,n)}\sum_{\beta\in B(n,n)}(-1)^{n-\|\alpha\|_0}\dbinom{n}{\beta}\frac{(n-\|\alpha\|_0)!\ \alpha^{\beta}}{\ \ n!n^{(n-\|\alpha\|_0)} \displaystyle\prod\limits_{\alpha_i\ne 0}\alpha_i} x^{\beta}\\
    =&\displaystyle\sum_{\beta\in B(n,n)}\dbinom{n}{\beta}\sum_{\alpha\in B(n,n)}(-1)^{n-\|\alpha\|_0}\frac{(n-\|\alpha\|_0)!\ \alpha^{\beta}}{\ \ n!n^{(n-\|\alpha\|_0)} \displaystyle\prod\limits_{\alpha_i\ne 0}\alpha_i} x^{\beta}.       \end{split}
 \end{equation*}
Denote $c_{\beta}(\beta\in B(n,n))$ by 
\begin{equation*}
    c_{\beta}=\sum_{\alpha\in B(n,n)}(-1)^{n-\|\alpha\|_0}\dbinom{n}{\beta}\frac{(n-\|\alpha\|_0)!\ \alpha^{\beta}}{\ \ n!n^{(n-\|\alpha\|_0)} \displaystyle\prod\limits_{\alpha_i\ne 0}\alpha_i}.
\end{equation*}
Without loss of generality, let $\tilde{\alpha}=\beta$ and we assume that the first r-elements of $\tilde{\alpha}$ are nonzeros $$\tilde{\alpha}=(b_1,\cdots,b_r,0,\cdots,0),$$
and $b_1+b_2+\cdots+b_r=n, b_i\in \N^*\ (i\in [r])$. Then we have 
\begin{equation*}
\begin{split}
  c_{\tilde{\alpha}}&=\sum_{k=r}^{n}(-1)^{n-k}\dbinom{n}{b_1,\cdots,b_r}\dbinom{n-r}{k-r}\dfrac{\ \ (n-k)!}{\ \ n!n^{(n-k)}}\displaystyle\sum_{a_1+\cdots a_k=n,\atop \forall a_i\in \N^*}\dfrac{a_1^{b_1}\cdots a_r^{b_r}}{a_1\cdots a_k}\\
  &=\dbinom{n}{b_1,\cdots,b_r}\sum_{k=r}^{n}\displaystyle\sum_{a_1+\cdots a_k=n,\atop \forall a_i\in \N^*}(-1)^{n-k}\dfrac{\ \ (n-k)!}{\ \ n!n^{(n-k)}a_1\cdots a_k}\dbinom{n-r}{k-r}a_1^{b_1}\cdots a_r^{b_r}.\\
  &\overset{\eqref{eq:11}}{=}\delta_{n,r}.
\end{split}
\end{equation*}
\end{proof}
 Next, we will use basic linear algebra theory in matrix analysis (see e.g., \cite{horn2012matrix}) to a homogeneous polynomial space $\bbH_d[x]$ to prove Theorem \ref{th:3}.
\begin{thm}\label{th:3}
The matrices $\whV(n,d)$ and $V(n,d)$ are nonsingular.
\end{thm}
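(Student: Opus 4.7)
The plan is to deduce nonsingularity from a change-of-basis argument in the homogeneous polynomial space $\bbH_d[x]$. By \eqref{eq:relationVvsVh}, the nonsingularity of $V(n,d)$ is equivalent to that of $\whV(n,d)$, so I focus on the latter. Identity \eqref{eq:iden} says that the rows of $\whV(n,d)$ record the coefficients of $\langle \alpha^i, x\rangle^d$ in the canonical monomial basis $\{x^{\alpha}: \alpha\in B(n,d)\}$ of $\bbH_d[x]$. Since $|B(n,d)|=s_{n,d}=\dim \bbH_d[x]$, it suffices to show that the family $\mathcal{B}=\{\langle \alpha, x\rangle^d : \alpha\in B(n,d)\}$ spans $\bbH_d[x]$; equivalently, that every monomial $x^{\beta}$ with $\beta\in B(n,d)$ can be written as a linear combination of elements of $\mathcal{B}$.

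To exhibit such combinations, I would lift Theorem \ref{th:2}, which handles only the special monomial $x_1 x_2 \cdots x_n$ in $n$ variables of degree $n$, to the general monomial via a variable-substitution trick. Given $\beta=(\beta_1,\ldots,\beta_n)\in B(n,d)$, introduce $d$ fresh variables $y_1,\ldots,y_d$ and define an index map $i:[d]\to[n]$ so that exactly $\beta_k$ of the $y_j$'s are identified with $x_k$; then $y_1 y_2\cdots y_d = x^{\beta}$. Applying Theorem \ref{th:2} in the $d$ variables $y$ gives
\begin{equation*}
x^{\beta}=\prod_{j=1}^{d} y_j
=\sum_{\gamma\in B(d,d)}(-1)^{d-\|\gamma\|_0}\frac{(d-\|\gamma\|_0)!}{d!\,d^{\,d-\|\gamma\|_0}\prod_{\gamma_j\neq 0}\gamma_j}\,\langle \gamma, y\rangle^d.
\end{equation*}
Undoing the substitution, $\langle \gamma, y\rangle=\sum_{j=1}^d \gamma_j x_{i(j)}=\langle \alpha(\gamma), x\rangle$ where $\alpha(\gamma)_k=\sum_{j: i(j)=k}\gamma_j$, and $\sum_k \alpha(\gamma)_k=\sum_j \gamma_j=d$, so $\alpha(\gamma)\in B(n,d)$. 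Hence $x^{\beta}$ is a linear combination of the elements $\langle \alpha, x\rangle^d$ with $\alpha\in B(n,d)$, possibly with several $\gamma$'s collapsing to the same $\alpha$.

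Because this works for every $\beta\in B(n,d)$, the family $\mathcal{B}$ spans the canonical basis of $\bbH_d[x]$ and therefore is itself a basis. Consequently, the rows of $\whV(n,d)$ are linearly independent in the coordinate representation on $\{x^{\alpha}\}$, which makes $\whV(n,d)$ nonsingular; by \eqref{eq:relationVvsVh}, $V(n,d)$ is nonsingular as well. The only nontrivial step is the lifting in the second paragraph: recognizing that a generic degree-$d$ monomial in $n$ variables factors into $d$ linear forms and can therefore be reduced to the $n=d$ case already handled by Theorem \ref{th:2}. The remaining verifications (tracking $\alpha(\gamma)\in B(n,d)$, matching cardinalities for the basis conclusion, and applying \eqref{eq:relationVvsVh}) are routine.
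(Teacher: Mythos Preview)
Your proposal is correct and follows essentially the same route as the paper's proof: both lift Theorem~\ref{th:2} from the case $\prod_{j=1}^d y_j$ to an arbitrary monomial $x^{\beta}$ via the substitution $y_j\mapsto x_{i(j)}$ (the paper phrases this as a partition of $[d]$ into blocks $I_j$ of sizes $r_j$), identify the resulting inner products with $\langle\alpha,x\rangle$ for some $\alpha\in B(n,d)$, and then invoke \eqref{eq:iden} and \eqref{eq:relationVvsVh} to conclude nonsingularity of $\whV(n,d)$ and $V(n,d)$. The only cosmetic difference is that you frame the spanning argument as a change-of-basis statement up front, whereas the paper states it at the end.
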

\begin{proof}
For any monomial $x^{\alpha}$ with $x\in\R^n$ and $\alpha\in B(n,d)$, we can present \begin{equation}\label{eq:alpha}
 x^{\alpha}=x_{i_1}^{r_1}x_{i_2}^{r_2}\cdots x_{i_k}^{r_k}=\underbrace{x_{i_1}\times\cdots \times x_{i_1}}_{r_1}\times \underbrace{x_{i_2} \times\cdots \times x_{i_2}}_{r_2}\cdots\times \underbrace{x_{i_k}\times \cdots \times x_{i_k}}_{r_k},
\end{equation}
where $|\alpha|=\sum_{i=1}^{k}r_i, (r_1,\cdots,r_k)\in (\N^*)^k $ with $k\le \min\{n,d\}$ and $i_1<i_2<\cdots<i_k.$
We get from Theorem \ref{th:2} that
\begin{equation}\label{eq:13}
    \prod_{i=1}^dy_i=\sum_{\beta\in B(d,d)} c_{\beta}\langle \beta, y\rangle^d,
\end{equation}
where $y\in\R^d$
and $c_{\beta}=(-1)^{d-\|\beta\|_0}(d-\|\beta\|_0)!\big/( d!d^{(d-\|\beta\|_0)}\prod\limits_{\beta_i\ne 0}\beta_i).$
Let us partition $[d]$ into $k$ sets $I_j(j\in[k])$ such that $|I_j|=r_j$. Then, we can rewrite $\langle \beta, y\rangle^d$ as 
\begin{equation}\label{eq:beta}
\langle \beta, y\rangle^d=\bigg(\sum_{j=1}^k\sum_{i\in I_j}\beta_iy_i\bigg)^d.    
\end{equation}
By taking $y=(\underbrace{x_{i_1},\cdots,x_{i_1}}_{r_1},\cdots,\underbrace{x_{i_k},\cdots,x_{i_k}}_{r_k})$,
it follows from \eqref{eq:alpha}, \eqref{eq:13} and \eqref{eq:beta} that 
\begin{equation}\label{eq:calpha}
  x^{\alpha}=\sum_{\beta\in B(d,d)}c_{\beta}\bigg(\sum_{j=1}^k\big(\sum_{i\in I_j}\beta_i\big)x_{i_j}\bigg)^d.
\end{equation}
For each $\beta \in B(d,d)$ and by denoting $u_j=\sum_{i\in I_j}\beta_i$, we get from $u_j\in\N^*$ and $\sum_{j=1}^{k}u_j=d$ that there exists a unique $\gamma\in B(n,d)$ in form of : 
$$\begin{tabular}{crcccccccccccccc}
&$\gamma=$& ( $0$ &$\cdots$&$u_1$&$\cdots$&0 & $\cdots$&$u_j$ &$\cdots$&0 & $\cdots$&$u_k$ &$\cdots$ & $0$&) \\[-2pt]
& &~~1&$\cdots$& $i_1$ &$\cdots$&$\cdot$& $\cdots$&$i_j$&$\cdots$&$\cdot$& $\cdots$&$i_k$ &$\cdots$ & $n$&
\end{tabular}$$
i.e., $\gamma$ is a vector of $\N^n$ such that $\gamma_{i_j} = u_j, \forall j\in [k]$ and $0$ for others, satisfying 
\begin{equation}\label{eq:gamma}
 \bigg(\sum_{j=1}^ku_jx_{i_j}\bigg)^d = \langle \gamma, x \rangle^d. 
\end{equation}
Therefore, it follows from \eqref{eq:calpha} and \eqref{eq:gamma} that  $\forall\alpha\in B(n,d),$
$$x^{\alpha}\in\Span\{\langle\gamma, x\rangle^n : \gamma\in B(n,d)\}.$$
%$$x^{\alpha}=\sum_{\beta\in B(d,d)}c_{\beta}\bigg(\sum_{j=1}^k\big(\sum_{i\in I_j}\beta_i\big)x_{i_j}\bigg)^d = \sum_{\gamma\in B(n,d)}c_{\gamma}\langle \gamma, x\rangle^d.$$
Then, the identity \eqref{eq:iden} yields that $\whV(n,d)$ is nonsingular. Hence, the matrix $V(n,d)$ is also nonsingular based on \eqref{eq:relationVvsVh}.\\
\end{proof}
\begin{cor}
The set $\mathcal{B}=\{\langle \alpha,x\rangle^d:\alpha\in B(n,d)\}$ is a (polynomial) basis of $\bbH_d[x]$.
\end{cor}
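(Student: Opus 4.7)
The plan is to deduce the corollary directly from the nonsingularity of $\whV(n,d)$ already established in Theorem \ref{th:3}, using the matrix identity \eqref{eq:iden} that relates the two families of polynomials. The backbone of the argument is a standard dimension-count: since $\bbH_d[x]$ has dimension $s_{n,d}$ with canonical basis $\{x^{\alpha}:\alpha\in B(n,d)\}$, and the candidate set $\mathcal{B}$ has exactly $s_{n,d}$ elements, it suffices to prove that $\mathcal{B}$ spans $\bbH_d[x]$; linear independence then follows automatically.

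First I would recall the identity \eqref{eq:iden}, which reads
\begin{equation*}
\begin{bmatrix}\langle\alpha^1,x\rangle^d\\ \vdots\\ \langle\alpha^{s_{n,d}},x\rangle^d\end{bmatrix}=\whV(n,d)\cdot\begin{bmatrix}x^{\alpha^1}\\ \vdots\\ x^{\alpha^{s_{n,d}}}\end{bmatrix}.
\end{equation*}
By Theorem \ref{th:3}, $\whV(n,d)$ is invertible, so multiplying both sides by $\whV(n,d)^{-1}$ expresses each canonical monomial $x^{\alpha^i}$ as an explicit linear combination of the polynomials $\langle\alpha^j,x\rangle^d$. This shows $\{x^{\alpha}:\alpha\in B(n,d)\}\subset \Span\mathcal{B}$, hence $\bbH_d[x]=\Span\mathcal{B}$.

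Finally, comparing cardinalities: $|\mathcal{B}|=s_{n,d}=\dim \bbH_d[x]$, and any spanning set of a finite-dimensional vector space whose cardinality equals the dimension is a basis. Therefore $\mathcal{B}$ is a basis of $\bbH_d[x]$. There is no real obstacle here beyond correctly invoking Theorem \ref{th:3}; the corollary is essentially a one-line restatement of nonsingularity in terms of a basis change between $\{x^{\alpha}\}$ and $\{\langle\alpha,x\rangle^d\}$.
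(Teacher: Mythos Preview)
Your proof is correct and follows exactly the paper's approach: the paper simply states that the corollary is an immediate consequence of Theorem \ref{th:3} and identity \eqref{eq:iden}, which is precisely the argument you spell out in detail.
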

\begin{proof}
This is an immediate consequence of Theorem \eqref{th:3} and identity \eqref{eq:iden}.
\end{proof}
Next lemma tells us that the matrix $V(n,d)$ has a block lower triangular form by some permutation of elements in $B(n, d)$. To see this, we define the following  lexicographical order.
\begin{defn}[Logically Reverse Lexicographical Order]
Given the set $\N$, and two sequences of numbers from $\N$ of length $n>0$, say $a=(a_1,a_2,\cdots,a_n)$ and $b=(b_1,b_2,\cdots,b_n)$. We call that $a$ is smaller than $b$ in logically reverse lexicographical order, denoted by
$$a \lrlex b$$ if one of the following conditions is verified: 
\begin{enumerate}
    \item[(i)] If $\Log(a)\neq \Log(b)$ and $\Log(a_i)>\Log(b_i)$ for the first $i$ where $a_i\neq b_i$;
    \item[(ii)] If $\Log(a)= \Log(b)$ and $a_j>b_j$ for the first $j$ where $a_j\neq b_j$.
\end{enumerate}
\end{defn}

\begin{exmp}
Given sequences $a=(1,2,0,3), b=(2,1,0,3), c=(1,2,3,0)$, Then $ \Log(a)=(1,1,0,1), \Log(b)=(1,1,0,1), \Log(c)=(1,1,1,0)$, and we have 
$$c\lrlex b\lrlex a.$$
\end{exmp}
\begin{lem}\label{lem:app4}
 Based on the logically reverse lexicographical order, the matrix $V(n,d)$ can be presented as a block lower triangular form as: 
 \begin{equation}\label{eq:blockV(n,d)}
 V(n,d)= \left[
 \begin{array}{ccccccc}
 \wtV_{(1,1)} & & & & & &\\
 \vdots& \ddots& & & & &\\
 * & \cdots & \wtV_{(1,n_1)}& & & &\\
 \vdots & \vdots & \vdots&\ddots & & &\\
 * & * & * & \cdots &\wtV_{(p,1)} &  &\\
 \vdots & \vdots & \vdots &\vdots &\vdots&\ddots &\\
 * & * & * & * & *& \cdots & \wtV_{(p,n_p)}
\end{array}
\right],
\end{equation}
where $p=\min\{n,d\}$. Moreover, \\
(a) $n_k=\tbinom{n}{k},\ \forall k\in [p],$ and
$$\wtV_{(k,1)}=\cdots=\wtV_{(k,n_k)},$$
where the block matrix $\wtV_{(k,t)}(\forall t\in [n_k])$ is exactly the power-product matrix generated by all k-compositions of integer $d$, i.e., 
\[
\wtV_{(k,t)}=
\left[
\begin{array}{ccc}
  {(\gamma^1)}^{\gamma^1}&\cdots& {(\gamma^1)}^{\gamma^{q}}\\
  \vdots&\ddots&\vdots\\
  {(\gamma^q)}^{\gamma^1}&\cdots&{(\gamma^{q})}^{\gamma^{q}}
 \end{array}
   \right]
\]
 with $|\gamma^i|=d, \gamma^i\in (\N^*)^k\ (i\in [q], q=\tbinom{d-1}{k-1}).$\\
(b) The matrix $\whV(n,d)$ is also a block lower triangular matrix.
\end{lem}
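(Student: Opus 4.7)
The proof revolves around a single structural observation: for $\alpha,\beta\in B(n,d)$, the entry $(\alpha)^{\beta}=\prod_{k=1}^{n}\alpha_{k}^{\beta_{k}}$ vanishes iff there exists a $k$ with $\alpha_{k}=0$ and $\beta_{k}>0$, equivalently $\operatorname{supp}(\beta)\not\subseteq\operatorname{supp}(\alpha)$ (using the convention $0^{0}=1$). Hence the zero/non-zero pattern of $V(n,d)$ is governed entirely by the partial order on $B(n,d)$ induced by support inclusion. The plan is first to show that the logically reverse lexicographical order refines this partial order so that (i) elements sharing a common support are listed consecutively, and (ii) whenever $\operatorname{supp}(\alpha)$ strictly contains $\operatorname{supp}(\beta)$, the entry $(\alpha)^{\beta}$ falls in the appropriate triangle to produce the block lower triangular form~\eqref{eq:blockV(n,d)}. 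The content of this step is the short lemma that if $\operatorname{supp}(\beta)\subsetneq\operatorname{supp}(\alpha)$, then the first index $i_{0}$ where $\Log(\alpha)$ and $\Log(\beta)$ disagree must lie in $\operatorname{supp}(\alpha)\setminus\operatorname{supp}(\beta)$, so $\Log(\alpha_{i_{0}})>\Log(\beta_{i_{0}})$; condition (i) of the lrlex definition then determines the relative position of $\alpha$ and $\beta$ in the ordering.

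Next I would partition $B(n,d)$ according to the value of $\Log(\alpha)$, i.e.\ according to the support set. For each $k\in\{1,\dots,p\}$ with $p=\min\{n,d\}$ and each $k$-subset $S\subseteq[n]$, the class of elements of $B(n,d)$ with support exactly $S$ is in bijection with the set of $k$-compositions of $d$, i.e.\ vectors in $(\N^{*})^{k}$ summing to $d$. The number of such compositions is $\binom{d-1}{k-1}$, giving the size of every block at level $k$, while the number of $k$-subsets of $[n]$ is $\binom{n}{k}=n_{k}$. Because lrlex first compares $\Log$ vectors under condition (i) and only then breaks ties by condition (ii), each support class forms a contiguous range along the diagonal, which justifies the block decomposition in~\eqref{eq:blockV(n,d)}.

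The identification of each diagonal block with $\wtV_{(k,t)}$ follows from a direct calculation: for $\alpha,\beta\in B(n,d)$ sharing a common support $S=\{i_{1}<\cdots<i_{k}\}$, the formula $(\alpha)^{\beta}=\prod_{j=1}^{k}\alpha_{i_{j}}^{\beta_{i_{j}}}$ depends only on the compositions $\gamma=(\alpha_{i_{1}},\dots,\alpha_{i_{k}})$ and $\delta=(\beta_{i_{1}},\dots,\beta_{i_{k}})$ and not on the embedding $S\hookrightarrow[n]$. Listing the elements of every support class in a common canonical composition order therefore makes all blocks at level $k$ equal to a single universal matrix, namely the power-product matrix of $k$-compositions of $d$, which gives $\wtV_{(k,1)}=\cdots=\wtV_{(k,n_{k})}$.

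For part (b), the relation $\whV(n,d)=V(n,d)\cdot D$ from~\eqref{eq:relationVvsVh}, with $D$ diagonal, shows that $\whV(n,d)$ is obtained from $V(n,d)$ by column scalings alone; such scalings preserve every zero entry and hence the block lower triangular shape. The main technical nuisance throughout is checking compatibility between the lrlex tie-breaking of condition (ii) and the canonical composition order used to define $\wtV_{(k,t)}$, so that within each block the rows and columns of distinct supports are matched up identically; otherwise the blocks would coincide only up to a shared row/column permutation. This turns out to be automatic because condition (ii) reduces on each support class to the same reverse-lex comparison of compositions in $(\N^{*})^{k}$, independent of the embedding. Apart from this bookkeeping, every step is a short structural argument.
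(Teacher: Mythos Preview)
Your core observation---that $(\alpha)^{\beta}=0$ iff $\operatorname{supp}(\beta)\not\subseteq\operatorname{supp}(\alpha)$---is exactly the engine of the paper's proof as well, and your identification of the diagonal blocks with the power-product matrix of $k$-compositions (independent of the embedding $S\hookrightarrow[n]$) and your argument for part~(b) via the diagonal relation~\eqref{eq:relationVvsVh} are both correct and match the paper.

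There is, however, a genuine gap in the ordering step. Under the \emph{pure} lrlex order, if $\operatorname{supp}(\beta)\subsetneq\operatorname{supp}(\alpha)$ then, as you correctly argue, $\Log(\alpha_{i_0})>\Log(\beta_{i_0})$ at the first discrepancy, so condition~(i) gives $\alpha\lrlex\beta$. But this means the \emph{row} index of $\alpha$ is smaller than the \emph{column} index of $\beta$, so the nonzero entry $(\alpha)^{\beta}$ lands in the \emph{upper} triangle, not the lower one. Concretely, for $n=d=3$ take $\alpha=(1,1,1)$ and $\beta=(3,0,0)$: pure lrlex puts $\alpha$ first, yet $(\alpha)^{\beta}=1$ sits above the diagonal. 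Moreover, pure lrlex does not group supports by cardinality: e.g.\ the $\Log$ vectors $(1,1,0),(1,0,1),(1,0,0),(0,1,1),\ldots$ interleave sizes $1$ and $2$, so you do not recover the specific layered structure $\wtV_{(1,\cdot)},\wtV_{(2,\cdot)},\ldots$ claimed in~\eqref{eq:blockV(n,d)}.

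The paper handles this not by globally sorting with lrlex but by a two-level order: first partition $B(n,d)=\bigcup_{k}\calS^{k}$ by support size $k=\|\alpha\|_{0}$, list the $\calS^{k}$ in increasing $k$, and only then apply lrlex \emph{within} each $\calS^{k}$. With this ordering the inter-level blocks ($k<l$) are automatically zero above the diagonal because a size-$l$ support cannot sit inside a size-$k$ support, and within each $\calS^{k}$ distinct supports of equal size are incomparable, giving a block-\emph{diagonal} (not merely triangular) structure there. Your argument becomes correct once you replace ``pure lrlex'' by this two-level order; everything else you wrote then goes through unchanged.
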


\begin{proof}
Let $\calS^k=\{\alpha: \|\alpha\|_0=k, \alpha \in B(n,d)\}$ where $k\in [p]$, then $B(n,d)$ is partitioned as: 
$$B(n,d)=\cup_{k=1}^{p}\calS^k,\ \calS^i\cap \calS^j = \emptyset,\ i\ne j, \ \forall i,j\in[p].$$
Sort $\calS^k$ by \emph{logically reverse lexicographical order} as $\calS^k_{\lrlex}$ and sort $B(n,d)$ as $\{\calS^1_{\lrlex},\cdots, \calS^p_{\lrlex}\}.$
Due to the fact that ${(\alpha^i)}^{\alpha^j}=0$ if and only if $\exists\omega_0\in[n]$ such that $\alpha^i_{\omega_0}=0, \alpha^j_{\omega_0} \ne 0.$ Therefore,\\ 
(i) Denote $n_k=|\calS^k_{\lrlex}|$, clearly, we have
$$n_k=\tbinom{n}{k}.$$ Then $\forall \alpha^i,\alpha^j\in \calS^k_{\lrlex}(i,j\in[n_k])$, the block $\wtV_k=[(\alpha^i)^{\alpha^j}]_{(i,j)}$ has the following form
 \[
 \wtV_k=
 \left(
 \begin{array}{ccc}
  (\alpha^1)^{\alpha^1}&\cdots& (\alpha^1)^{\alpha_{n_k}}\\
  \vdots&\ddots&\vdots\\
  (\alpha^{n_k})^{\alpha_1}&\cdots&(\alpha^{n_k})^{\alpha^{n_k}}
 \end{array}
   \right)
   =
    \left(
 \begin{array}{ccc}
  \wtV_{(k,1)}&\cdots& 0\\
  \vdots&\ddots&\vdots\\
  0&\cdots&\wtV_{(k,n_k)} 
 \end{array}
   \right),\]
   where the block matrix $\wtV_{(k,t)}$ is exactly the power-product matrix generated by all k-compositions of integer $d$, of which  the notation is referred to, see e.g., \cite{heubach2004compositions,dickson1929introduction}:
    \[
\wtV_{(k,t)}=
\left[
\begin{array}{ccc}
  {(\gamma^1)}^{\gamma^1}&\cdots& {(\gamma^1)}^{\gamma^{q}}\\
  \vdots&\ddots&\vdots\\
  {(\gamma^q)}^{\gamma^1}&\cdots&{(\gamma^{q})}^{\gamma^{q}}
 \end{array}
   \right]
\]
with $|\gamma^i|=d, \gamma^i\in (\N^*)^k\ (i\in [q], q=\tbinom{d-1}{k-1}).$ Therefore, $$\wtV_{(k,1)}=\cdots=\wtV_{(k,n_k)}.$$ 
 (ii) $\forall l>k$, $\forall \alpha^i\in \calS^k_{\lrlex}, \forall \alpha^j\in \calS^l_{\lrlex}$, we have $(\alpha^i)^{\alpha^j}=0.$\\
We conclude from (i) and (ii) that the matrix $V(n,d)$ have a block lower triangular form as \eqref{eq:blockV(n,d)}. Consequently, by Equation \eqref{eq:relationVvsVh}, the matrix $\whV(n,d)$ is also in block lower triangular form.
\end{proof}
\section{Sparsity of $V(n,d)$}
For any positive integers $n$ and $d$, the size of the matrix $V(n,d)$ (or $\whV(n,d)$) is $\binom{n+d-1}{d}^2$. This could be a very large matrix, e.g., the size of $V(n,d)$ with $(n,d)=(100,4)$ reaches up to millions! In this section, we are interested in the sparsity of the matrix $V(n,d)$ with respect to $n$ and $d$. Let us denote $\nnz(M)$ by the number of nonzero entries of a matrix $M\in \R^{n\times m}$, and $\spa(M)$ the sparsity of $M$ defined by 
$$\spa(M) = 1 - \frac{\nnz(M)}{n\times m},$$
then we have the sparsity theory as follows:
\begin{thm}[Sparsity theory]\label{th:4}
$$\nnz(V(n,d))=\sum_{k=1}^{\min\{n,d\}}\tbinom{n}{k}\tbinom{d-1}{k-1}\tbinom{d+k-1}{d};\quad \spa(V(n,d))=1-\frac{\nnz(V(n,d))}{\tbinom{n+d-1}{n}^2}.$$ 
\end{thm}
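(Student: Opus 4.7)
The plan is to reduce the theorem to a pure counting exercise by first characterizing exactly when an entry of $V(n,d)$ is nonzero, and then partitioning the rows according to support size. The key observation is that with the convention $0^0 = 1$, the entry
$$(\alpha^i)^{\alpha^j} = \prod_{k=1}^{n} (\alpha^i_k)^{\alpha^j_k}$$
vanishes if and only if some factor vanishes, which happens precisely when there exists an index $k$ with $\alpha^i_k = 0$ and $\alpha^j_k > 0$. Equivalently, the entry is nonzero if and only if $\mathrm{supp}(\alpha^j) \subseteq \mathrm{supp}(\alpha^i)$. This is essentially the same support-inclusion condition already underlying Lemma \ref{lem:app4}, so I would cite that observation rather than re-derive it.

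Next I would partition $B(n,d)$ by zero-norm. For each $k \in [\min\{n,d\}]$, let $\calS^k = \{\alpha \in B(n,d) : \|\alpha\|_0 = k\}$. An element of $\calS^k$ is specified by choosing a $k$-subset of $[n]$ to be its support (in $\binom{n}{k}$ ways) and then choosing the positive values at those positions, i.e., a composition of $d$ into exactly $k$ positive parts (in $\binom{d-1}{k-1}$ ways). Hence
$$|\calS^k| = \binom{n}{k}\binom{d-1}{k-1}.$$

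Now I would count the nonzero entries row by row. Fix a row index $\alpha^i \in \calS^k$; the number of columns $\alpha^j$ for which $(\alpha^i)^{\alpha^j} \ne 0$ equals the number of $\alpha^j \in B(n,d)$ with $\mathrm{supp}(\alpha^j) \subseteq \mathrm{supp}(\alpha^i)$. Such $\alpha^j$ are in bijection with weak compositions of $d$ into $k$ parts (the parts being the values of $\alpha^j$ on the $k$ chosen support positions of $\alpha^i$), whose count is $\binom{d+k-1}{k-1} = \binom{d+k-1}{d}$. Critically, this quantity depends only on $k = \|\alpha^i\|_0$, not on $\alpha^i$ itself, so summing over all rows gives
$$\nnz(V(n,d)) = \sum_{k=1}^{\min\{n,d\}} |\calS^k|\,\binom{d+k-1}{d} = \sum_{k=1}^{\min\{n,d\}} \binom{n}{k}\binom{d-1}{k-1}\binom{d+k-1}{d},$$
which is the stated formula. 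The sparsity identity is then immediate from the definition $\spa(M) = 1 - \nnz(M)/(n \times m)$ together with the fact that $V(n,d)$ is $s_{n,d} \times s_{n,d}$ with $s_{n,d} = \binom{n+d-1}{d} = \binom{n+d-1}{n}$.

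There is no real obstacle here; the whole argument is essentially combinatorial bookkeeping once the support-inclusion criterion is isolated. The only point worth stating carefully is the convention $0^0 = 1$ (so that a zero in $\alpha^j$ against a zero in $\alpha^i$ does \emph{not} kill the product), since that is what makes the nonzero pattern match the support-containment relation rather than equality of supports.
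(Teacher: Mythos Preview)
Your proposal is correct and follows essentially the same approach as the paper: both arguments isolate the support-inclusion criterion $(\alpha^i)^{\alpha^j}\neq 0 \Leftrightarrow \mathrm{supp}(\alpha^j)\subseteq\mathrm{supp}(\alpha^i)$, partition the rows by $\|\alpha^i\|_0=k$, and then multiply the row count $\binom{n}{k}\binom{d-1}{k-1}$ by the per-row nonzero count $\binom{d+k-1}{d}$. The only cosmetic difference is that the paper reads the row counts off from the block structure of Lemma~\ref{lem:app4}, whereas you derive them directly via compositions; the substance is identical.
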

\begin{proof}
Let us denote the row index  of the $l$-th row of the diagonal block $\wtV(k,t)$ in $V(n,d)$ by $R_{ktl}$. The $R_{ktl}$-th row of $V(n,d)$ consists of elements $(\alpha^i)^{\alpha^j}$ with $i=R_{ktl}, j\in[s_{n,d}])$ and $\nnz(\alpha^i)=k$. Based on the fact that
$$(\alpha^i)^{\alpha^j}\ne 0\ \Leftrightarrow \forall \omega\in[n], \alpha^i_{\omega}=0 \Rightarrow \alpha^j_{\omega}=0,$$
clearly, the number of nonzero elements of the $R_{ktl}$-th row of $V(n,d)$ is exactly $\tbinom{d+k-1}{d}.$ We get from Lemma \ref{lem:app4} that each block $\wtV(k,t)$ has $\tbinom{d-1}{k-1}$ rows and there are $n_k=\tbinom{n}{k}$ equal blocks of $\wtV(k,t)$, then for all $k\in [\min\{n,d\}]$, we have $$\nnz(V(n,d))=\sum_{k=1}^{\min\{n,d\}} \tbinom{n}{k}\tbinom{d-1}{k-1}\tbinom{d+k-1}{d}.$$
It follows immediately from the definition of $\spa(V(n,d))$ that 
$$\spa(V(n,d))=1-\frac{\nnz(V(n,d))}{\tbinom{n+d-1}{n}^2}.$$ 
\end{proof}
Here are some discussions about the sparsity of the matrix $V(n,d)$ with respect to $n$ and $d$. Based on Theorem \ref{th:4}, it is easy to verify that  
\begin{itemize}
    \item[(i)] For fixed $n$, $\lim\limits_{d\to \infty}\spa(V(n,d))=0.$
    \item[(ii)] For fixed $d$, $\lim\limits_{n\rightarrow \infty}\spa(V(n,d))=1.$
\end{itemize}
which demonstrates that when $n$ is fixed, as $d\to\infty$, the matrix $V(n,d)$ is almost completely dense. However, according to Figure \ref{fig:m-spa}, when $d\leq20$, except for the matrices with fixed $n\leq 10$, the other matrices with larger $n$ have more than $90\%$ sparsity. Moreover, the \emph{sparsity} of the matrix $V(n,d)$ decreases slowly with respect to the increase of $d$ (with fat tail).
\begin{figure}[ht!]
    \centering
    \subfigure[$d$ v.s. sparsity] {\includegraphics[width=0.47\textwidth]{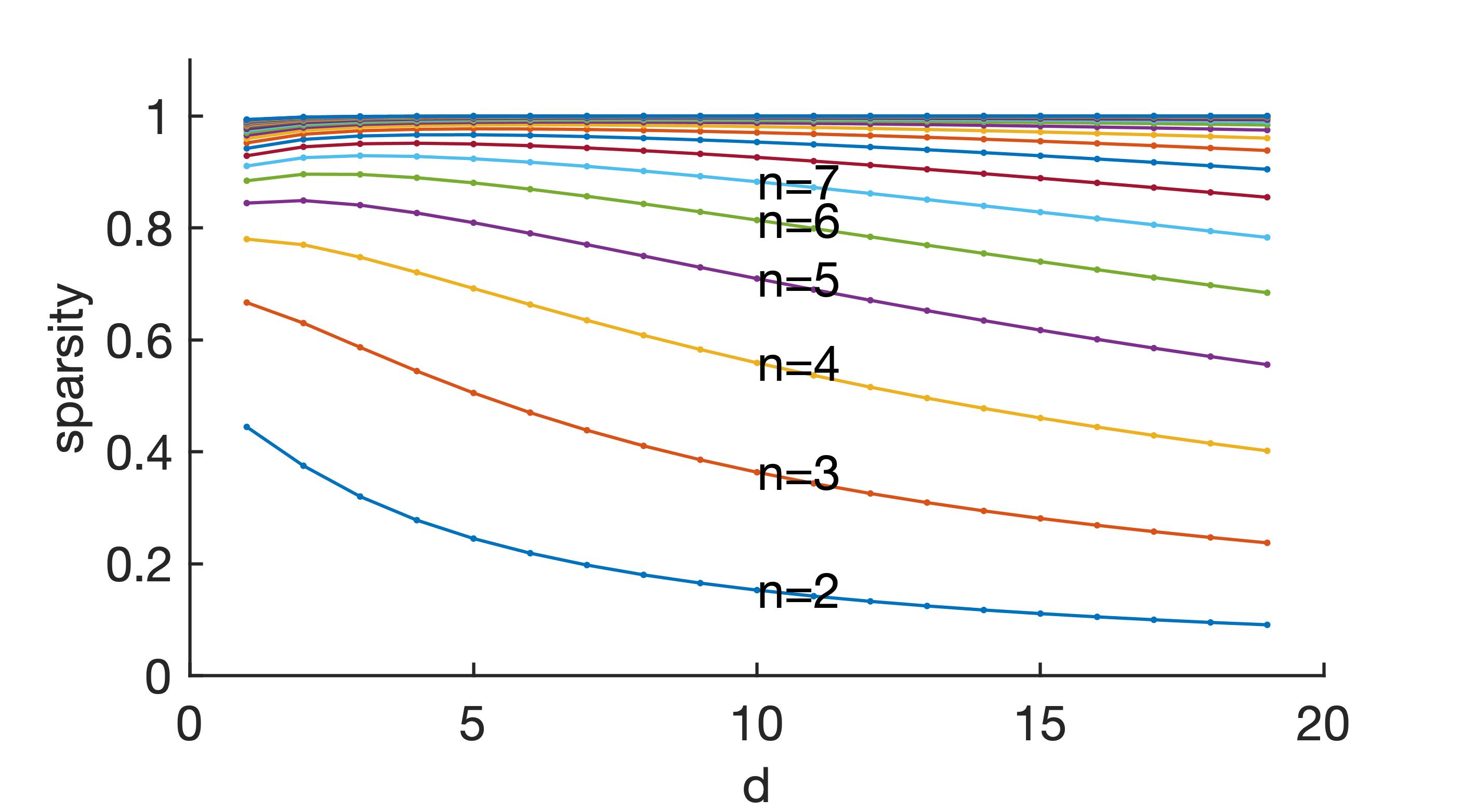}
    \label{fig:m-spa}}
    \subfigure[$n$ v.s. sparsity]
    {\includegraphics[width=0.47\textwidth]{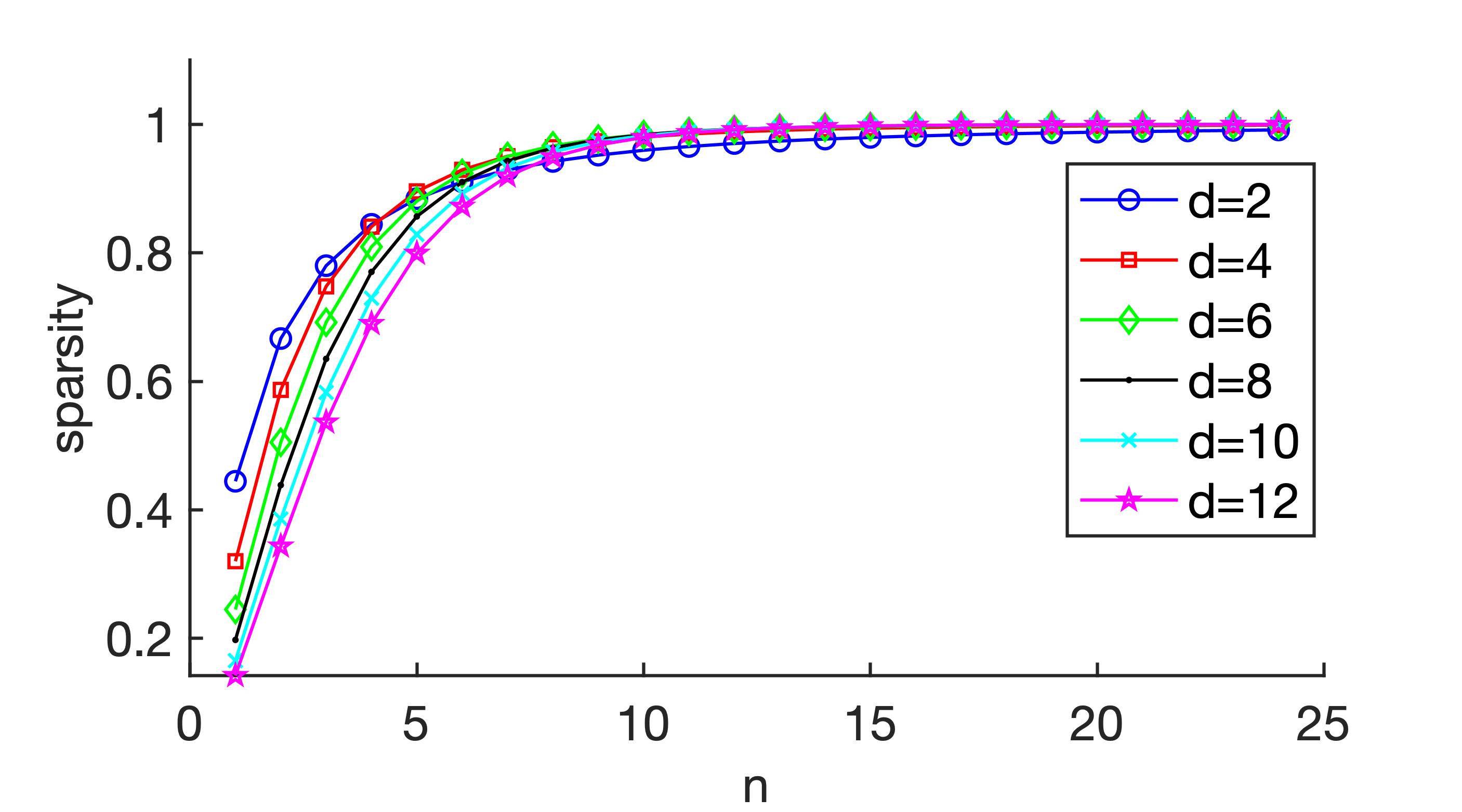}
    \label{fig:n-spa}}
    \caption{The relationship between $n$ or $d$ and sparsity}
\end{figure}
On the other hand, if $d$ is fixed, then $V(n,d)$ is almost a zero matrix as $n\to\infty$. We observe in Figure \ref{fig:n-spa} that for all fixed $d\in \{2,4,6,8,10,12\}$, the matrix $V(n,d)$ is always sparse with more than $90\%$ sparsity for $n\ge10$. Moreover, the sparsity of the matrix $V(n,d)$ seems to be more sensitive to $d$ than to $n$.
\begin{thm}[Inverse sparsity theory]
$$\spa(V^{-1}(n,d))\ge \spa(V(n,d)).$$
\end{thm}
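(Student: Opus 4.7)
The plan is to show that the support (that is, the pattern of nonzero entries) of $V^{-1}(n,d)$ is contained in that of $V(n,d)$, which is strictly stronger than what the block lower triangular form of Lemma \ref{lem:app4} immediately delivers. The pivotal observation is the support-inclusion characterization
$$V(n,d)_{ij}=(\alpha^i)^{\alpha^j}\ne 0 \ \Longleftrightarrow\ \mathrm{supp}(\alpha^j)\subseteq \mathrm{supp}(\alpha^i),$$
which follows directly from the convention $0^0=1$ and $0^k=0$ for $k\in\N^*$. I intend to prove that the same implication governs $V^{-1}(n,d)$, so that its nonzero count cannot exceed $\nnz(V(n,d))$.

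For each $T\subseteq[n]$, I would introduce the coordinate subspace
$$W_T=\Span\{e_\alpha:\ \alpha\in B(n,d),\ \mathrm{supp}(\alpha)\supseteq T\}\subseteq \R^{s_{n,d}}.$$
The first step is to verify that $V(n,d)$ leaves $W_T$ invariant: for any $\alpha^j\in B(n,d)$ with $\mathrm{supp}(\alpha^j)\supseteq T$, every nonzero entry of the column $V(n,d)\,e_{\alpha^j}$ occurs at some row $i$ with $\mathrm{supp}(\alpha^i)\supseteq \mathrm{supp}(\alpha^j)\supseteq T$, so $V(n,d)\,e_{\alpha^j}\in W_T$. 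Since $V(n,d)$ is nonsingular by Theorem \ref{th:3} and $W_T$ is finite-dimensional, $V(n,d)|_{W_T}$ is a bijection on $W_T$, and hence $V^{-1}(n,d)$ also leaves $W_T$ invariant.

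The second step applies this invariance column-by-column. For any fixed $j$, choosing $T=\mathrm{supp}(\alpha^j)$ yields $e_{\alpha^j}\in W_T$, so $V^{-1}(n,d)\,e_{\alpha^j}\in W_T$. This means $(V^{-1}(n,d))_{ij}=0$ whenever $\mathrm{supp}(\alpha^j)\not\subseteq \mathrm{supp}(\alpha^i)$, i.e., precisely at the positions where $V(n,d)_{ij}=0$. Consequently $\nnz(V^{-1}(n,d))\le \nnz(V(n,d))$, which is equivalent to the claim $\spa(V^{-1}(n,d))\ge \spa(V(n,d))$.

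The only subtle point, and hence the only real obstacle, is resisting the temptation to invoke the block lower triangular form of Lemma \ref{lem:app4} as an off-the-shelf black box; that form alone only forces zeros above the block diagonal of the inverse, and misses the finer zero pattern dictated by subset inclusion within the lower-triangular blocks. The argument above is built directly on the partial order of support inclusion on $B(n,d)$, which is what pins down all the inherited zeros of $V^{-1}(n,d)$.
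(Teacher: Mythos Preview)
Your proof is correct, and it reaches the same key implication as the paper --- namely that $(V(n,d))_{ij}=0$ forces $(V^{-1}(n,d))_{ij}=0$ --- but by a genuinely different mechanism.

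The paper argues through the polynomial interpretation. Working with $\whV(n,d)$, it fixes a pair $(i,j)$ with $(\alpha^i)^{\alpha^j}=0$, picks a coordinate $\omega_0$ with $\alpha^i_{\omega_0}=0$ and $\alpha^j_{\omega_0}\ne 0$, and then appeals to the explicit construction in the proof of Theorem~\ref{th:3} (equations \eqref{eq:calpha} and \eqref{eq:gamma}) to show that $x^{\alpha^i}$ lies in $\Span\{\langle\gamma,x\rangle^d:\gamma_{\omega_0}=0\}$. Since $\langle\alpha^j,x\rangle^d$ is not in that span, uniqueness of basis coefficients forces $(\whV^{-1}(n,d))_{ij}=0$. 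In effect the paper is also exploiting an invariant subspace, but one indexed by a single vanishing coordinate $\omega_0$ and realized on the polynomial side, and it relies on the explicit representation formula rather than on bare nonsingularity.

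Your argument is purely linear-algebraic: you pass to the coordinate subspace $W_T$ indexed by the full support $T=\mathrm{supp}(\alpha^j)$, observe directly from the zero pattern that $V(n,d)W_T\subseteq W_T$, and invoke only the nonsingularity of $V(n,d)$ (Theorem~\ref{th:3}) to conclude that $V^{-1}(n,d)$ also preserves $W_T$. This is cleaner in that it avoids the polynomial machinery and the explicit coefficients of \eqref{eq:calpha}--\eqref{eq:gamma}; it would apply verbatim to any nonsingular matrix whose zero pattern is governed by a partial order via support inclusion. The paper's route, on the other hand, makes the connection to the basis $\mathcal{B}$ of $\bbH_d[x]$ transparent and reuses work already done. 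Both yield the identical entrywise conclusion, so neither is strictly stronger.
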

\begin{proof}
The conclusion follows by the fact that $(V(n,d))_{ij}=0 \Rightarrow (V^{-1}(n,d))_{ij}=0$. To prove this, first, we know that $(V(n,d))_{ij}=0 \Rightarrow (V^{-1}(n,d))_{ij}=0$ if and only if $(\whV(n,d))_{ij}=0 \Rightarrow ((\whV(n,d))^{-1})_{ij}=0$, and we have the equivalence:
$$(\whV(n,d))_{ij}=0 \Leftrightarrow (\alpha^i)^{\alpha^j}=0 \Leftrightarrow  \text{there exists}\ \omega_0\in[n] \ \text{such that}\ \alpha^i_{\omega_0}=0\ \text{and} \ \alpha^j_{\omega_0} \ne 0.$$ 
 As we know that $x^{\alpha^i}$ is a linear combination of $\{\langle\beta, x\rangle^d : \beta\in B(n,d)\}=\{\langle\beta^1, x\rangle^d,\cdots,\langle\beta^{s_{n,d}}, x\rangle^d\}$ where all $\beta^l\in B(n,d)$, i.e.,
\[x^{\alpha^i} = \sum_{k=1}^{s_{n,d}} (\whV^{-1}(n,d ))_{ik}\langle \beta^k, x\rangle^d.\]
On the other hand, because of  $\alpha^i_{\omega_0}=0$, we can get from \eqref{eq:calpha} and \eqref{eq:gamma} that
$x^{\alpha^i}\in \Span\{\langle\gamma, x\rangle^d :\gamma_{\omega_0}=0, \gamma\in B(n,d)\}.$ Combining $\alpha^j_{\omega_0}\ne0$ and $\alpha^j\in B(n,d)$, we have $$\langle\alpha^j,x\rangle^d\notin\{\langle\gamma, x\rangle^d :\gamma_{\omega_0}=0, \gamma\in B(n,d)\}.$$
Since there is a unique representation of $x^{\alpha^i}$ in the basis $\{\langle\beta, x\rangle^d : \beta\in B(n,d)\}$, then $(\whV^{-1}(n,d))_{ij}=0$ and $(V^{-1}(n,d))_{ij}=0$. Therefore, the conclusion is verified immediately.  
\end{proof}

\section{Determinant of $V(n,d)$}
\begin{cor}\label{cor:4.1}
By representing the matrix $V(n,d)$ in block lower triangular form as in Lemma \ref{lem:app4}, then 
%with ${n \choose i}$ blocks of $\wtV(i,d), \forall i\in [\min\{n,d\}]$, where the block $\wtV(i,d)$ is the power-product matrix generated by $\widetilde{B}(i,d)$ (a subset of $B(n,d)$) consisting of all choices for putting $d$ balls into $i$ bins without empty bin (by convention that $\widetilde{B}(i,d)$ is empty if $i>d$). Then we get that 
$$\det V(n,d) = \prod_{i=1}^{\min\{n,d\}} \left(\det \widetilde{V}_{(i,1)} \right)^{n\choose i}.$$
%$B(n,d)=\bigcup_{i=1}^{n}B_{i}$
\end{cor}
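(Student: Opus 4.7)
The plan is to invoke Lemma \ref{lem:app4} directly and reduce the claim to the standard formula for the determinant of a block lower triangular matrix. By Lemma \ref{lem:app4}, after reordering the elements of $B(n,d)$ according to the logically reverse lexicographical order, $V(n,d)$ takes the block lower triangular form \eqref{eq:blockV(n,d)}. Since this reordering simultaneously permutes the rows and the columns of $V(n,d)$ by the \emph{same} permutation $\sigma$ (as pointed out in the introduction just after the running example), the determinant is unchanged: the row permutation contributes a sign $\operatorname{sgn}(\sigma)$ and the column permutation contributes the same sign, and $\operatorname{sgn}(\sigma)^2 = 1$. Hence, without loss of generality, we may work with the block lower triangular form.

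The key step is then the classical identity that the determinant of a block triangular matrix equals the product of the determinants of its diagonal blocks. Applying this to \eqref{eq:blockV(n,d)} gives
\begin{equation*}
\det V(n,d) \;=\; \prod_{k=1}^{p}\, \prod_{t=1}^{n_k} \det \widetilde{V}_{(k,t)},
\end{equation*}
where $p = \min\{n,d\}$ and $n_k = \binom{n}{k}$.

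Finally, Lemma \ref{lem:app4}(a) tells us that for each fixed $k$ the blocks are identical: $\widetilde{V}_{(k,1)} = \widetilde{V}_{(k,2)} = \cdots = \widetilde{V}_{(k,n_k)}$. Substituting this equality and collecting the repeated factors yields
\begin{equation*}
\det V(n,d) \;=\; \prod_{k=1}^{\min\{n,d\}} \bigl(\det \widetilde{V}_{(k,1)}\bigr)^{\binom{n}{k}},
\end{equation*}
which is the desired formula.

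There is essentially no obstacle here: the corollary is a direct bookkeeping consequence of the structural Lemma \ref{lem:app4}. The only point requiring a brief remark is the invariance of the determinant under the row--column reordering used to expose the block structure; once that is noted, the remainder of the argument is mechanical.
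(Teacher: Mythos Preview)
Your proof is correct and follows the same approach as the paper, which simply states that the corollary is an immediate consequence of Lemma~\ref{lem:app4}. You have merely spelled out the details (invariance of the determinant under the simultaneous row/column permutation, the block-triangular determinant formula, and the equality of the diagonal blocks within each level $k$) that the paper leaves implicit.
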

\begin{proof}
Immediate consequence of Lemma \ref{lem:app4}.
\end{proof}

\begin{cor}
$$\det V(n,d) \neq 0.$$
\end{cor}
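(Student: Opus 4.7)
The statement is nothing but the assertion that $V(n,d)$ is nonsingular, which Theorem \ref{th:3} has already established. So the shortest plan is a one-liner: invoke Theorem \ref{th:3} together with the equivalence between nonsingularity of a square matrix and having nonzero determinant.

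Because the corollary sits immediately after Corollary \ref{cor:4.1}, a slightly longer route that makes the link to the determinant formula explicit would be preferable. In that form the plan is: first, by Corollary \ref{cor:4.1}, write
$$\det V(n,d) = \prod_{i=1}^{\min\{n,d\}} \bigl(\det \widetilde{V}_{(i,1)}\bigr)^{\binom{n}{i}};$$
next, recall from Lemma \ref{lem:app4} that each $\widetilde{V}_{(i,t)}$ sits as a diagonal block in the block lower triangular form of $V(n,d)$, so a vanishing $\det \widetilde{V}_{(i,1)}$ for any $i$ would force $\det V(n,d)=0$ and contradict Theorem \ref{th:3}; hence every factor in the product is nonzero and so is the product.

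There is no genuine obstacle here, since all of the analytic difficulty has been absorbed into Theorem \ref{th:3} and its supporting combinatorial lemmas. The only temptation I would resist is trying to prove $\det \widetilde{V}_{(i,1)} \neq 0$ from scratch: each such block is a power-product matrix indexed by the strict compositions of $d$ into $i$ positive parts, and one could attempt the generating-function and involution machinery of Lemmas \ref{lem:app1}--\ref{lem:app3} on this restricted indexing set, but that would merely duplicate work already completed. Invoking Theorem \ref{th:3} once is strictly cleaner.
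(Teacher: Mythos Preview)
Your proposal is correct and matches the paper's own proof, which is simply ``Immediate consequence of Theorem \ref{th:3}.'' Your optional longer route through Corollary \ref{cor:4.1} is also valid but, as you yourself note, unnecessary.
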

\begin{proof}
Immediate consequence of Theorem \ref{th:3}.
\end{proof}

\paragraph{Special case : compute $\det V(2,d)$ with $d\in \mathbb{N}^*$}
\begin{prop}\label{prop:1}
Let $A_k(a,b)$ be the $(k+1)\times (k+1)$ matrix 
as
$$A_k(a,b)=
\begin{bmatrix}a^k&a^{k-1}b&\cdots&ab^{k-1}&b^k\\
(a-1)^k&(a-1)^{k-1}(b+1)&\cdots&(a-1)(b+1)^{k-1}&(b+1)^k\\
\vdots&\vdots&&\vdots&\vdots\\
(a-k)^k&(a-k)^{k-1}(b+k)&\cdots&(a-k)(b+k)^{k-1}&(b+k)^k
\end{bmatrix}$$
whose $(i,j)$ element is $(a-i+1)^{k-j+1}(b+i-1)^{j-1}$, then we have $$\det A_k(a,b)=(a+b)^{k(k+1)/2}\prod_{h=1}^{k}h!.$$
\end{prop}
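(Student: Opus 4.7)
The key observation is that, writing $a_i = a - i + 1$ and $b_i = b + i - 1$ for $i = 1, \dots, k+1$, the sum $a_i + b_i = a + b$ is independent of $i$. The $(i,j)$-entry of $A_k(a,b)$ is exactly $a_i^{k-j+1} b_i^{j-1}$, so row $i$ consists of the values of the degree-$k$ monomials $x^k, x^{k-1}y, \dots, y^k$ evaluated at $(x,y) = (a_i, b_i)$.

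The plan is to reduce to a Vandermonde determinant. First I would factor $a_i^k$ out of row $i$ (working temporarily over $\mathbb{Q}(a,b)$, where $a_i \neq 0$), leaving the row $(1, t_i, t_i^2, \dots, t_i^k)$ with $t_i = b_i/a_i$. This gives
\begin{equation*}
\det A_k(a,b) = \Bigl(\prod_{i=1}^{k+1} a_i^k\Bigr) \prod_{1 \le i < j \le k+1} (t_j - t_i).
\end{equation*}
Next I would compute $t_j - t_i = (a_i b_j - a_j b_i)/(a_i a_j)$ and observe the key identity
\begin{equation*}
a_i b_j - a_j b_i = (a-i+1)(b+j-1) - (a-j+1)(b+i-1) = (a+b)(j-i),
\end{equation*}
which is a direct expansion. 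Therefore $t_j - t_i = (a+b)(j-i)/(a_i a_j)$, and the product becomes
\begin{equation*}
\prod_{i<j}(t_j - t_i) = (a+b)^{\binom{k+1}{2}} \cdot \frac{\prod_{i<j}(j-i)}{\prod_{i<j} a_i a_j}.
\end{equation*}

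Now I would collect the pieces. The denominator satisfies $\prod_{i<j} a_i a_j = \prod_{i=1}^{k+1} a_i^k$, because each $a_i$ appears alongside the $k$ other indices, so it cancels exactly with the prefactor $\prod_i a_i^k$. The combinatorial product telescopes as $\prod_{i<j}(j-i) = \prod_{j=2}^{k+1}(j-1)! = \prod_{h=1}^{k} h!$, and $\binom{k+1}{2} = k(k+1)/2$. Assembling these,
\begin{equation*}
\det A_k(a,b) = (a+b)^{k(k+1)/2} \prod_{h=1}^{k} h!,
\end{equation*}
as required. Finally, since both sides are polynomials in $a$ and $b$ and the identity holds on the Zariski-dense set where all $a_i \neq 0$, it extends to all $(a,b) \in \mathbb{R}^2$.

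The only mildly subtle step is the identity $a_i b_j - a_j b_i = (a+b)(j-i)$; everything else is bookkeeping of Vandermonde factors. There is no real obstacle beyond making sure the powers of $a_i$ cancel cleanly, which they do because of the symmetry between the two indices in the product over pairs.
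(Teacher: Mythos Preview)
Your proof is correct and is essentially the paper's second proof: factor $a_i^k$ from each row to obtain a Vandermonde in $t_i=b_i/a_i$, then use $a_ib_j-a_jb_i=(a+b)(j-i)$ to extract the $(a+b)^{k(k+1)/2}$ factor and cancel the $a_i$'s. The only difference is cosmetic: you dispose of the degenerate case (some $a_i=0$) by a polynomial-identity/Zariski-density argument, whereas the paper treats it as a separate case~(ii).
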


\paragraph{First proof for Proposition \ref{prop:1}}
\begin{proof}
By summing to each column the next one, we can factor $a+b$ from each of the first $k$ columns so that

$$\det A_k(a,b)=(a+b)^k 
\begin{vmatrix}a^{k-1}&a^{k-2}b&\cdots&b^{k-1}&b^k\\
(a-1)^{k-1}&(a-1)^{k-2}(b+1)&\cdots&(b+1)^{k-1}&(b+1)^k\\
\vdots&\vdots&&\vdots&\vdots\\
(a-k)^{k-1}&(a-k)^{k-2}(b+k)&\cdots&(b+k)^{k-1}&(b+k)^k
\end{vmatrix}.$$
Note that the exponents of the ``$a$" terms in the first $k$ columns are decreased by 1, in particular there are no ``$a$" terms in the last two columns. 

Reiterating, we can get rid of all ``$a$" terms:

$$\det A_k(a,b)=(a+b)^{k(k+1)/2} \begin{vmatrix}1&b&b^2&\cdots&b^{k-1}&b^k\\
1&(b+1)&(b+1)^2&\cdots&(b+1)^{k-1}&(b+1)^k\\
\vdots&\vdots&\vdots&&\vdots&\vdots\\
1&(b+k)&(b+k)^2&\cdots&(b+k)^{k-1}&(b+k)^k
\end{vmatrix}.$$
This last matrix is a Vandermonde matrix whose determinant is $$\prod_{0\leq j<i\leq k} (b+i)-(b+j) = \prod_{0\leq j<i\leq k} i-j = (k\times (k-1) \times \cdots 1) \times ((k-1) \times (k-2)\times 1) \times \cdots 1 = \prod_{h=1}^kh!.$$
It follows that 
$$\det A_k(a,b) = (a+b)^{k(k+1)/2}\prod_{h=1}^{k}h!.$$
\end{proof}

\paragraph{Second proof for Proposition \ref{prop:1}}
\begin{proof}
(i) Suppose that $a, a-1, \ldots, a-k$ are all non-zeros, then we can factorize $a^k$ from the first row of $\det A_k(a,b)$, and $(a-1)^k$ from the second row and so on, thus $$\det A_k(a,b) = a^k(a-1)^k\cdots (a-k)^k \begin{vmatrix}1 & \frac b a &\cdots & (\frac b a)^{k}\\
1 & (\frac{b+1}{a-1}) & \cdots & (\frac{b+1}{a-1})^{k}\\
\vdots&\vdots&\vdots&\vdots&\\
1 & (\frac{b+k}{a-k}) & \cdots & (\frac{b+k}{a-k})^{k}\\
\end{vmatrix}.$$
The last determinant is Vandermonde which is computed by:
\begin{align*}
    \prod_{0\leq j<i\leq k} \frac{b+i}{a-i}-\frac{b+j}{a-j} &= \prod_{0\leq j<i\leq k} \frac{\left( b+a\right) \,\left( i-j\right) }{\left( a-i\right) \,\left( a-j\right) } \\
    &= \prod_{0\leq j<i\leq k} \frac{\left( b+a\right) }{\left( a-i\right) \,\left( a-j\right) } \prod_{0\leq j<i\leq k} \left( i-j\right)  \\
    &=  (a+b)^{k(k+1)/2} \times \prod_{h=1}^kh!\times \prod_{0\leq j<i\leq k} \frac{1}{\left( a-i\right) \,\left( a-j\right) }\\
    &=  (a+b)^{k(k+1)/2} \times \prod_{h=1}^kh!\times \frac{1}{a^k(a-1)^k\cdots (a-k)^k}.
\end{align*}
It follows that 
$$\det A_k(a,b) = (a+b)^{k(k+1)/2}\prod_{h=1}^{k}h!.$$
(ii) Otherwise, suppose that there exists $i\in \{1,\ldots,k\}$ such that $a-i=0$, then  
$$
\det A_k(a,b)=(b+i)^k \begin{small}
\begin{vmatrix}a^k&a^{k-1}b&\cdots&ab^{k-1}\\
(a-1)^k&(a-1)^{k-1}(b+1)&\cdots&(a-1)(b+1)^{k-1}\\
\vdots&\vdots&&\vdots\\
(a-i+1)^k&(a-i+1)^{k-1}(b+1)&\cdots&(a-i+1)(b+1)^{k-1}\\
(a-i-1)^k&(a-i-1)^{k-1}(b+1)&\cdots&(a-i-1)(b+1)^{k-1}\\
\vdots&\vdots&&\vdots\\
(a-k)^k&(a-k)^{k-1}(b+k)&\cdots&(a-k)(b+k)^{k-1}
\end{vmatrix}\end{small}.$$
We can obtain in a similar way as in case (i) to deduce that $$\det A_k(a,b) = (a+b)^{k(k+1)/2}\prod_{h=1}^{k}h!.$$
\end{proof}
Now, we are ready to Compute $\det V(2,d)$ for $d\in \N^*$.
\begin{thm}
For any $d\in \mathbb{N}^*$, we have 
\begin{equation}
    \label{eq:detV(2,d)}
\det V(2,d) = d^{d(d+1)/2}\prod_{h=1}^dh!.
\end{equation}
\end{thm}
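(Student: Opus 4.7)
The plan is to reduce the theorem to a direct application of Proposition~\ref{prop:1} by recognizing $V(2,d)$, under a suitable ordering of $B(2,d)$, as the matrix $A_d(d,0)$. Since the paper already observes that permuting the indexing set $B(n,d)$ produces only a symmetric row-column permutation of $V(n,d)$ (and hence preserves the determinant), this rearrangement is legitimate.

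First, I would order $B(2,d)$ as $\alpha^i=(d-i+1,\,i-1)$ for $i=1,\ldots,d+1$, so the first coordinate runs from $d$ down to $0$. With this ordering, the $(i,j)$ entry of $V(2,d)$ is, by definition of the power-product (using $0^0=1$),
\[
(\alpha^i)^{\alpha^j} \;=\; (d-i+1)^{\,d-j+1}\,(i-1)^{\,j-1}.
\]
Next, I would compare this to the $(i,j)$ entry of the matrix $A_k(a,b)$ from Proposition~\ref{prop:1}, namely $(a-i+1)^{k-j+1}(b+i-1)^{j-1}$. Specializing to $k=d$, $a=d$, $b=0$ gives precisely $(d-i+1)^{d-j+1}(i-1)^{j-1}$, so the two $(d+1)\times(d+1)$ matrices are literally equal: $V(2,d)=A_d(d,0)$. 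One should verify that the boundary entries that involve $0^0$ (in the corners coming from $\alpha=(d,0)$ and $\alpha=(0,d)$) agree under both conventions; this is an easy direct check.

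Finally, I would invoke Proposition~\ref{prop:1} with $a=d,b=0,k=d$ to conclude
\[
\det V(2,d) \;=\; \det A_d(d,0) \;=\; (d+0)^{d(d+1)/2}\prod_{h=1}^{d}h! \;=\; d^{d(d+1)/2}\prod_{h=1}^{d}h!.
\]
The only delicate point, and the main ``obstacle'' in an otherwise mechanical argument, is getting the indexing of $B(2,d)$ aligned with the row/column indexing of $A_k(a,b)$ so that the two matrices match term for term, including the $0^0=1$ convention at the corners; once that identification is set up, the theorem follows immediately from the already-proven determinant formula for $A_k(a,b)$.
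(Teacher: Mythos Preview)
Your proof is correct and is in fact more direct than the paper's own argument. Both proofs reduce the computation to Proposition~\ref{prop:1}, but they do so differently. The paper orders $B(2,d)$ as $(0,d),(1,d-1),\ldots,(d,0)$, observes that the first and last rows each have a single nonzero entry $d^d$, peels those off, and then identifies the remaining $(d-1)\times(d-1)$ central block as $A_{d-2}(d-1,1)$ with its $i'$-th row scaled by $i'(d-i')$; this forces a separate check for $d=1$ and a small bookkeeping step to collect the row factors $((d-1)!)^2$. You instead order $B(2,d)$ as $(d,0),(d-1,1),\ldots,(0,d)$ and recognize the entire matrix $V(2,d)$ as $A_d(d,0)$ on the nose, so Proposition~\ref{prop:1} applies in one shot with no case distinction and no extraneous scaling factors. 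The only point to be careful about is that the specialization $a=d$, $b=0$ makes $a-k=0$ and $b=0$, so some entries of $A_d(d,0)$ involve $0^0$; since the paper's convention is $0^0=1$ and the first proof of Proposition~\ref{prop:1} goes through verbatim under that convention (equivalently, $\det A_k(a,b)$ is a polynomial identity in $a,b$), this causes no trouble, and you rightly flag it.
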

\begin{proof}
For $d=1$, we have $V(2,1) = \begin{bmatrix}
1 & 0 \\
0 & 1
\end{bmatrix}$, then $\det V(2,1) = 1,$ and the equation \eqref{eq:detV(2,d)} is verified.
For $d\geq 2$, we order the set $B(2,d)$ lexicographically as $\{(0,d),(1,d-1),...,(d,0)\}$ to get
$$V(2,d)=\begin{bmatrix}d^d&0&0&\cdots&0 & 0\\
*&(d-1)^{d-1}&(d-1)^{d-2}&\cdots&(d-1)&*\\
*&(d-2)^{d-1}2&(d-2)^{d-2}2^2&\cdots&(d-2)2^{d-1}&*\\
*&(d-3)^{d-1}3&(d-3)^{d-2}3^2&\cdots&(d-3)3^{d-1}&*\\
\vdots&\vdots&\vdots&&\vdots&\vdots\\
0&0&0&\cdots&0&d^d
\end{bmatrix}.$$
Note that the central matrix is $A_{d-2}(d-1,1)$ with the first row multiplied by $d-1$, the second multiplied by $(d-2)2$, the third multiplied by $(d-3)3$, and so on, then we get for $d\geq 2$ that 
$$\det V(2,d) =d^{2d}(d-1)!(d-1)!\det A_{d-2}(d-1,1)=d^{d(d+1)/2}\prod_{h=1}^dh!.$$
We conclude that for any $d\in \N^*$, the equation \eqref{eq:detV(2,d)} is verified.
\end{proof}

\begin{conj}\label{conj:1}
Given $d\in \N^*$, let $\Pd$ be the set of prime numbers up to $d$, then for each prime number $k\in \Pd$, there exists a polynomial $f_k(n)$ of variable $n$ with degree up to $d-1$ such that 
$$\det V(n,d) = \prod_{k \in \Pd} k^{f_k(n)}.$$
\end{conj}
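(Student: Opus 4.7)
The plan is to bootstrap from Corollary~\ref{cor:4.1}, which already reduces the determinant of $V(n,d)$ to the block determinants
\[
\det V(n,d) = \prod_{i=1}^{\min\{n,d\}} \bigl(\det \widetilde{V}_{(i,1)}\bigr)^{\binom{n}{i}}.
\]
The crucial observation is that each block $\widetilde{V}_{(i,1)}$ is a fixed integer matrix depending only on $d$ and $i$, not on $n$; all $n$-dependence on the right-hand side lives in the exponents $\binom{n}{i}$, which are polynomials in $n$ of degree $i$.

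First I would show that every prime dividing $\det \widetilde{V}_{(i,1)}$ lies in $\Pd$. This is immediate: each entry $(\gamma^j)^{\gamma^l}$ is a product of integer powers of components of $i$-compositions of $d$, hence a product of positive integers bounded by $d$; so the determinant is an integer whose prime factorization is supported on $\Pd$. Writing $\bigl|\det \widetilde{V}_{(i,1)}\bigr| = \prod_{k \in \Pd} k^{e_{k,i}}$ with $e_{k,i} = v_k\bigl(\det \widetilde{V}_{(i,1)}\bigr)$ and substituting into Corollary~\ref{cor:4.1} yields, up to sign,
\[
\det V(n,d) \;=\; \pm \prod_{k \in \Pd} k^{f_k(n)}, \qquad f_k(n) \;=\; \sum_{i=1}^{\min\{n,d\}} e_{k,i} \binom{n}{i}.
\]

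For the degree bound, the unique $d$-composition of $d$ is $(1,\dots,1)$, so $\widetilde{V}_{(d,1)} = [1]$ and thus $e_{k,d} = 0$ for every prime $k$. Since $\binom{n}{i} = 0$ whenever $n < i$, truncating at $\min\{n,d\}$ is harmless, and we may uniformly write $f_k(n) = \sum_{i=1}^{d-1} e_{k,i} \binom{n}{i}$ for all $n$; this is a polynomial in $n$ of degree at most $d-1$, as asserted by the conjecture.

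The hard part will be the sign. The identity above is only the claim up to a $\pm 1$ factor, so one must argue that $\det \widetilde{V}_{(i,1)} > 0$ for every $1 \le i \le d$ (or at least that the signs of those factors, raised to the exponents $\binom{n}{i}$, multiply to $+1$ for every $n$). All computed instances, such as $V(3,2)$ and the formula $\det V(2,d) = d^{d(d+1)/2}\prod_{h=1}^{d} h!$ established above, are positive, but a general proof will presumably require either a combinatorial interpretation of $\det \widetilde{V}_{(i,1)}$ (for instance via a Lindstr\"om--Gessel--Viennot-style path model on compositions) or an explicit LU-type factorization of $\widetilde{V}_{(i,1)}$ with positive diagonal entries. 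This positivity question seems to be the real content of the conjecture; once it is settled, the rest follows from Corollary~\ref{cor:4.1} by the counting argument sketched above, and one also gets the bonus that the exponents $f_k(n)$ are explicit integer-linear combinations of the binomials $\binom{n}{i}$.
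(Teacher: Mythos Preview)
The paper states this as an open conjecture and offers no proof, so there is nothing in the paper to compare your argument against. Your reduction via Corollary~\ref{cor:4.1} is the natural starting point, and your observation that $\widetilde{V}_{(d,1)}=[1]$ forces $e_{k,d}=0$ and hence caps the degree of $f_k(n)$ at $d-1$ is correct and useful. However, there is a genuine gap that is at least as serious as the sign issue you flag at the end. You write that each entry of $\widetilde{V}_{(i,1)}$ is a product of positive integers bounded by $d$, ``so the determinant is an integer whose prime factorization is supported on $\Pd$.'' This does not follow: the entries, and therefore every single product $\prod_j a_{j,\sigma(j)}$ in the Leibniz expansion, have all prime factors in $\Pd$, but the determinant is a \emph{signed sum} of such products, and a sum of integers supported on $\Pd$ can perfectly well acquire a prime factor outside $\Pd$ (for instance $8-1=7$, with $8$ and $1$ supported on $\{2\}$). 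Nothing in your argument rules this phenomenon out for $\det\widetilde{V}_{(i,1)}$.

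What your reduction actually establishes is that the conjecture is equivalent to the pair of statements, for every $1\le i\le d$: (a) $\det\widetilde{V}_{(i,1)}>0$, and (b) every prime dividing $\det\widetilde{V}_{(i,1)}$ is at most $d$. You correctly identify (a) as open but treat (b) as already done; in fact (b) is not settled by your argument and is arguably the heart of the conjecture. Neither (a) nor (b) is proved in your proposal, and neither is proved in the paper.
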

For example, we can derive that $$\det V(n,5) = 2^{f_2(n)} 3^{f_3(n)} 5^{f_5(n)} =   2^{\frac{n\,\left(n-1\right)\,\left(n^2+3\,n+14\right)}{6}}\,3^{\frac{n\,\left(n-1\right)\,\left(n+1\right)}{2}}\,5^{\frac{n\,\left(n^3+10\,n^2+35\,n+74\right)}{24}}.$$
\begin{rem}
By taking large number of numerical tests, we still have no counterexample to reject the Conjectures \ref{conj:1}. If the conjecture is proved, then it is not difficult to derive the expression of $f_k(n)$ by the method of undetermined coefficients to obtain the expression of $\det V(n,d)$ for given $d$. The related topic deserves more attention in our future work.
\end{rem}

\bibliography{mybibfile}

\end{document}